\numberwithin{equation}{section}
\newtheorem{thm}{Theorem}[section]
\newtheorem*{prob*}{Problem}
\newtheorem{prop}[thm]{Proposition}
\newtheorem{lem}[thm]{Lemma}
\newtheorem{cor}[thm]{Corollary}
\newtheorem{ithm}{Theorem}
\newtheorem{icor}[ithm]{Corollary}
\newtheorem*{iprob*}{Problem}
\theoremstyle{definition}
\newtheorem{defi}[thm]{Definition}
\newtheorem*{defi*}{Definition}
\newtheorem{rem}[thm]{Remark}
\newtheorem{exam}[thm]{Example}
\newtheorem*{acks*}{Acknowledgements}
\newcommand{\ZZ}{\mathbf{Z}}
\newcommand{\PP}{\mathbf{P}}
\newcommand{\RR}{\mathbf{R}}
\newcommand{\NN}{\mathbf{N}}
\newcommand{\CC}{\mathbf{C}}
\newcommand{\QQ}{\mathbf{Q}}
\newcommand{\SL}{\mathbf{SL}}
\newcommand{\PSL}{\mathbf{PSL}}
\newcommand{\se}{\subseteq}
\newcommand{\inv}{^{-1}}
\newcommand{\ol}{\overline}
\DeclareMathOperator{\Homeo}{Homeo}
\DeclareMathOperator{\Prob}{Prob}
\newcommand{\spa}[1]{\dot{#1}}
\title[Comments on piecewise-projective groups]{Some comments on piecewise-projective groups\\ of the line}
\author[Nicolas Monod]{Nicolas Monod}
\address{\'Ecole Polytechnique Fédérale de Lausanne (EPFL)\\
CH–1015 Lausanne,
Switzerland}
\begin{document}

\begin{abstract}
We consider groups of piecewise-projective homeomorphisms of the line which are known to be non-amenable using notably the Carri{\`e}re--Ghys theorem on ergodic equivalence relations. Replacing that theorem by an explicit fixed-point argument, we can strengthen the conclusion and exhibit uncountably many ``amenability gaps'' between various piecewise-projective groups.
\end{abstract}
\maketitle



\begin{flushright}
\begin{minipage}[t]{0.85\linewidth}\itshape\small
This note is dedicated with admiration to Rostislav Grigorchuk at the occasion of his 70th birthday. Slava taught me about Thompson's group when I was a student --- and many times since then.
\end{minipage}
\end{flushright}

\setcounter{tocdepth}{1}
\tableofcontents

\section{Introduction}
The purpose of this note is to revisit and to strengthen the non-amenability of the group $H(\RR)$ of piecewise-projective homeomorphisms of the real line and of many of its subgroups.

The motivation to revisit the proof given in \cite{Monod_PNAS} is that the method it introduced to establish non-amenability relied on the theory of equivalence relations, specifically on a remarkable theorem of Carrière--Ghys \cite{Carriere-Ghys} adressing a conjecture of Connes and Sullivan. We shall show that the non-amenability can be established from first principles and without the Carrière--Ghys theorem. This possibility was alluded to in Remark~11 of \cite{Monod_PNAS}; this time, we give an elementary group-theoretical proof of non-amenability. Namely, in \Cref{sec:basic: non-amen} we exhibit concrete convex compact spaces on which the groups act without fixed point. This applies to $H(\RR)$ and to subgroups defined over arithmetic rings such at $\QQ$, $\ZZ[\sqrt2]$, $\ZZ[1/p]$ and their uncountably many variants.

The two motivations to strengthen non-amenability are, first, that a number of \emph{amenability-like} properties were established in \cite{Monod_PNAS}, most prominently the fact that $H(\RR)$ does not contain non-abelian free subgroups. Secondly, the best-known piecewise-projective group is Thompson's group $F$, for which amenability remains a notorious open question. In fact, our criterion fails precisely for $F$ and we have repeatedly been asked: does the non-amenability of the various other groups imply or suggest anything for Thompson's group?

Towards this question, we shall prove that there are infinitely many ``layers'' of non-amenability stacked atop one another within the subgroups of $H(\RR)$. To any set $S$ of prime numbers, we associate the subgroup $\Gamma_S$ of $H(\RR)$ obtained by restricting the coefficients of the projective transformations to the ring $\ZZ[1/S]$ of $S$-integers. This defines a poset of $2^{\aleph_0}$ subgroups and $F$ lies at the bottom in the group $\Gamma_\varnothing$ with integral coefficient.

\begin{ithm}\label{thm:many:gaps}
Let $S, S'$ be any sets of prime numbers with $S \subsetneqq S'$.

Then $\Gamma_S$ is not co-amenable in $\Gamma_{S'}$.
\end{ithm}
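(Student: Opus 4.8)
\emph{Plan.} I will use the fixed-point form of co-amenability: $\Gamma_S$ is co-amenable in $\Gamma_{S'}$ if and only if every affine $\Gamma_{S'}$-action on a compact convex set that has a $\Gamma_S$-fixed point also has a $\Gamma_{S'}$-fixed point. Thus it suffices to produce one compact convex $\Gamma_{S'}$-space that carries a $\Gamma_S$-fixed point but no $\Gamma_{S'}$-fixed point. Since $S \subsetneqq S'$ I may fix a prime $p \in S' \setminus S$, and the entire construction will be read off at this single place $p$.

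I take for $K$ the compact convex space underlying the non-amenability argument of \Cref{sec:basic: non-amen}, specialised to the place $p$. Recall that $\Gamma_{S'}$ acts on $\PP^1(\RR)$ by piecewise-projective homeomorphisms, hence nonsingularly for the Lebesgue class, and carries the germ cocycle $\beta\colon \Gamma_{S'}\times\PP^1(\RR)\to\PSL_2(\ZZ[1/S'])$ assigning to $(g,x)$ the projective piece of $g$ at $x$ (defined off the countable set of breakpoints). Post-composing with the inclusion $\PSL_2(\ZZ[1/S'])\hookrightarrow\PSL_2(\QQ_p)$ I regard $\beta$ as a cocycle into $\PSL_2(\QQ_p)$. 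Let $K$ be the weak-$*$ compact convex space of measurable maps $\PP^1(\RR)\to\Prob\bigl(\PP^1(\QQ_p)\bigr)$, with $\Gamma_{S'}$ acting affinely by $(g\cdot\phi)_x=\beta(g,g^{-1}x)_*\,\phi_{g^{-1}x}$. A $\Gamma_{S'}$-fixed point of $K$ is exactly a $\beta$-equivariant measurable family $x\mapsto\phi_x$, that is, a witness that $\beta$ is an amenable cocycle; the explicit fixed-point argument of \Cref{sec:basic: non-amen}, applied to $\Gamma_{S'}$ and the prime $p\in S'$, shows that no such family exists, so $K$ has no $\Gamma_{S'}$-fixed point.

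The decisive new input is that $\Gamma_S$ \emph{does} fix a point of this same $K$, and here the hypothesis $p\notin S$ enters purely arithmetically. Because $\ZZ[1/S]\subseteq\ZZ_p$, the restricted cocycle $\beta|_{\Gamma_S}$ takes all its values in the \emph{compact} subgroup $\PSL_2(\ZZ_p)\subseteq\PSL_2(\QQ_p)$. Let $\mu_0\in\Prob(\PP^1(\QQ_p))$ be the $\PSL_2(\ZZ_p)$-invariant probability measure. The constant family $x\mapsto\mu_0$ is measurable and satisfies $\beta(g,x)_*\mu_0=\mu_0$ for every $g\in\Gamma_S$ and almost every $x$, since $\beta(g,x)\in\PSL_2(\ZZ_p)=\Stab(\mu_0)$. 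Hence this constant section is a $\Gamma_S$-fixed point of $K$, and the pair $(K,\mu_0)$ certifies that $\Gamma_S$ is not co-amenable in $\Gamma_{S'}$.

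The only genuine difficulty is the one already resolved in \Cref{sec:basic: non-amen}: the non-amenability of $\beta|_{\Gamma_{S'}}$, i.e. the true absence of a measurable equivariant family of measures. What makes this absence robust — and what distinguishes the present situation from the trivial one over a countable base, where a diagonal assignment $x\mapsto\delta_x$ would furnish an equivariant family — is that $\PP^1(\RR)$ admits no Lebesgue-almost-everywhere-defined measurable map to $\PP^1(\QQ_p)$, the rational points forming a null set. Once that input is granted, the theorem reduces to the single arithmetic dichotomy that inverting $p$ is exactly the difference between a cocycle confined to the compact group $\PSL_2(\ZZ_p)$ and one escaping to infinity in $\PSL_2(\QQ_p)$.
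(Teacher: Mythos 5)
Your proof is correct and follows essentially the same route as the paper: fix $p\in S'\smallsetminus S$, take the space $K$ of measurable maps $\PP^1_\RR\to\Prob(\PP^1_{\QQ_p})$ with the piecewise (germ-cocycle) action, invoke the non-amenability argument of \Cref{sec:basic: non-amen} to rule out a $\Gamma_{S'}$-fixed point, and observe that the constant map at the $\SL_2(\ZZ_p)$-invariant measure is $\Gamma_S$-fixed since $\ZZ[1/S]\se\ZZ_p$ --- this is precisely \Cref{thm:all:gaps}. One cosmetic caveat: your closing heuristic that ``$\PP^1_\RR$ admits no almost-everywhere-defined measurable map to $\PP^1_{\QQ_p}$'' is false as literally stated (constant maps exist); the real obstruction is the absence of an \emph{equivariant} family, which your cited argument correctly supplies, so nothing in the proof depends on that remark.
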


The definition of co-amenability is recalled below (\Cref{sec:less:amen}); informally, this means that $\Gamma_{S'}$ is ``even less amenable'' than $\Gamma_S$.

In particular, we obtain a mathematically well-defined version of this heuristic answer to the above question: no, the non-amenability of our various subgroups of $H(\RR)$ does not give any hint for Thompson's group. Informally: for $S$ non-empty, $\Gamma_S$ is non-amenable \emph{regardless} of the amenability status of $F$. Had $F$ been co-amenable in $\Gamma_S$, then our non-amenability results would have implied the non-amenability of $F$. Contrariwise, if $F$ is non-amenable, then $\Gamma_S$ is still ``even less amenable'' than $F$.

\medskip

The reader might regret that one can nest only countably many subgroups $\Gamma_S$ into a chain of pairwise not co-amenable subgroups, whereas the poset of all $\Gamma_S$ consists of continuum many subgroups of the countable group $H(\QQ)$.

Not to worry: \Cref{thm:many:gaps} follows from a more general statement that indeed allows us to distinguish \emph{any two} $\Gamma_S$ from the perspective of ``mutual non-amenability''. A concrete way to state this is as follows:

\begin{ithm}\label{thm:more:gaps}
Let $S, S'$ be any sets of prime numbers with $S\neq S'$.

Then there exists a convex compact \textup{(}explicit\textup{)} $H(\QQ)$-space in which one and only one of the two subgroups $\Gamma_S$ and $\Gamma_{S'}$ admits a fixed point.
\end{ithm}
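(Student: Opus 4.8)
The plan is to construct, for each prime $p$, an explicit convex compact $H(\QQ)$-space that "detects" whether $p \in S$, and then to combine these detectors for the finitely many primes in the symmetric difference $S \triangle S'$. Since $S \neq S'$, there is a prime $p$ lying in exactly one of $S, S'$; I would aim to build a space where the presence of a fixed point is equivalent to $p$ being absent from the coefficient ring, so that exactly one of $\Gamma_S, \Gamma_{S'}$ fixes a point.

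First I would recall the fixed-point/convex-compact-space machinery from \Cref{sec:basic: non-amen}: there the non-amenability of $\Gamma_S$ (for suitable $S$) is witnessed by the \emph{absence} of a fixed point in some explicit convex compact space on which the group acts affinely. The key is to make such a space depend on a single prime $p$. The natural candidate is a space of probability measures (or means) on a boundary-type object attached to the projective line, built so that the action of a projective transformation with coefficients in $\ZZ[1/S]$ has a fixed point precisely when the relevant $p$-adic denominators never appear. Concretely, I would look at the fixed points or "breakpoints" of the piecewise-projective maps: an element of $\Gamma_S$ can only have breakpoints at parabolic fixed points defined over $\ZZ[1/S]$, and whether $p$ is inverted controls which rational points of $\PP^1$ are reachable as breakpoints. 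The convex compact space would then be something like the space of means on the orbit of a well-chosen point, or on the set of $p$-relevant breakpoints, where a fixed mean exists iff the group does not genuinely move that $p$-structure around.

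The main technical step is to show the \emph{sharp dichotomy}: the chosen space must have a fixed point for the group with the smaller coefficient ring and no fixed point for the larger one, with the cut occurring exactly at the single prime $p$. For the existence half (a fixed point when $p \notin$ the ring), I would exploit that $\Gamma_{S\setminus\{p\}}$ preserves enough structure—e.g. a $p$-adic valuation or a natural $\ZZ[1/(S\setminus\{p\})]$-invariant barycenter—to produce the fixed point by an averaging or projection argument that is available only when $p$ is excluded. For the non-existence half, I would reuse the fixed-point-free construction underlying \Cref{thm:many:gaps}: once $p$ is inverted, the group contains transformations (such as the relevant affine maps $x \mapsto p^{\pm 1} x$ together with parabolics) generating the "ping-pong"-type dynamics that forbid any invariant barycenter on that space, exactly as in the basic non-amenability argument.

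The hard part will be engineering a \emph{single} space that isolates one prime cleanly, rather than merely distinguishing "some coefficients added" from "none." The natural fixed-point spaces see the whole ring $\ZZ[1/S]$ at once, so the difficulty is to localize the obstruction at $p$: I expect to do this by passing to the $p$-adic completion or to a quotient that forgets all primes except $p$, so that the space only "feels" the $p$-component of the coefficients. If that localization is faithful enough to retain the fixed-point-free dynamics when $p$ is present, yet collapses to admit an invariant mean when $p$ is absent, the dichotomy of \Cref{thm:more:gaps} follows, and \Cref{thm:many:gaps} is then immediate by taking $p \in S' \setminus S$ under the hypothesis $S \subsetneqq S'$.
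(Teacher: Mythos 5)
Your proposal is essentially the paper's own proof: choose a prime $p$ lying in exactly one of $S,S'$ and use the $p$-adic projective line as the detector, the relevant space (\Cref{thm:all:gaps}) being exactly the one already built for the basic non-amenability argument, namely measurable maps $\PP^1_\RR\to\Prob(\PP^1_{\QQ_p})$. The localisation at a single prime that you single out as the hard part is in fact automatic: if $p\notin S$ then $\ZZ[1/S]\se\ZZ_p$, so $\Gamma_S$ fixes the constant map whose value is the $\SL_2(\ZZ_p)$-invariant measure on $\PP^1_{\QQ_p}$ (compactness of $\SL_2(\ZZ_p)$), while if $p\in S'$ the absence of a $\Gamma_{S'}$-fixed point is \Cref{thm:basic:non-amen} verbatim --- so the detours through breakpoints, means on orbits, and the (unneeded, and in general false) finiteness of the symmetric difference $S\mathbin{\triangle}S'$ can all be discarded.
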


As it turns out, the latter statement is formally stronger than \Cref{thm:many:gaps} even when $S \se S'$. This will be explained in \Cref{sec:less:amen} once the notion of \textbf{relative co-amenability} will have been recalled. This ``amenability gap'' between subgroups can also be reflected in the \textbf{relative Furstenberg boundary} $\partial(G, G_1)$ introduced in \cite{Bearden-Kalantar,Monod_furstenberg}; \Cref{thm:more:gaps} implies:

\begin{icor}\label{cor:rel:bnd}
The relative Furstenberg boundaries $\partial(H(\QQ), \Gamma_S)$, where $S$ ranges over all subsets of prime numbers, are pairwise non-isomorphic compact $H(\QQ)$-spaces.
\end{icor}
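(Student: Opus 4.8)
The plan is to derive \Cref{cor:rel:bnd} from \Cref{thm:more:gaps} by feeding the fixed-point dichotomy into the universal property that defines the relative Furstenberg boundary. Write $G = H(\QQ)$ and fix two distinct sets of primes $S \neq S'$; the goal is to show $\partial(G,\Gamma_S) \not\cong \partial(G,\Gamma_{S'})$ as compact $G$-spaces, after which the pairwise statement is immediate since the pair was arbitrary. First I would recall from \cite{Bearden-Kalantar,Monod_furstenberg} that $\partial(G,H)$ is the universal $(G,H)$-boundary, and that in particular its $G$-isomorphism type is an invariant of the relative co-amenability behaviour of $H$ in $G$ in the sense to be made precise in \Cref{sec:less:amen}.

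The mechanism I would use is the passage to probability measures. The space $\Prob(\partial(G,H))$ is a convex compact $G$-space carrying a canonical $H$-fixed point coming from the relative boundary, and it is universal among strongly proximal convex compact $G$-spaces equipped with such a point. Consequently, a $G$-isomorphism $\partial(G,\Gamma_S) \cong \partial(G,\Gamma_{S'})$ induces an isomorphism $\Prob(\partial(G,\Gamma_S)) \cong \Prob(\partial(G,\Gamma_{S'}))$ of these universal objects, and hence forces $\Gamma_S$ and $\Gamma_{S'}$ to admit a fixed point on exactly the same (strongly proximal) convex compact $G$-spaces; equivalently, it makes them mutually relatively co-amenable in $G$.

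At this point I would invoke \Cref{thm:more:gaps}: for $S \neq S'$ it produces an explicit convex compact $G$-space on which one and only one of $\Gamma_S, \Gamma_{S'}$ has a fixed point. This directly contradicts the coincidence of fixed-point behaviour extracted above, so the two relative boundaries cannot be $G$-isomorphic. Letting the pair range over all subsets of the primes then yields the pairwise non-isomorphism asserted in \Cref{cor:rel:bnd}.

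The hard part is not the contradiction itself but making the bridge between \Cref{thm:more:gaps} and the universal property airtight. Two points need care. First, the dichotomy must be detected at the level of the boundary, so I would either arrange the distinguishing space of \Cref{thm:more:gaps} to be strongly proximal (or replace it by a strongly proximal sub-object or quotient to which the universal mapping property applies), or else appeal directly to the characterisation of relative co-amenability by the relative Furstenberg boundary. Second, I must use the universal property in the correct variance --- that every convex compact $G$-space with an $H$-fixed point receives a $G$-affine map from $\Prob(\partial(G,H))$ --- so that existence of a fixed point for one subgroup transports to the other once the boundaries are identified; extracting this precise statement, together with the exact $H$-structure of $\partial(G,H)$, from the constructions of \cite{Bearden-Kalantar,Monod_furstenberg} is the real content of the argument.
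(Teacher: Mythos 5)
Your overall architecture is the same as the paper's: reduce \Cref{cor:rel:bnd} to \Cref{thm:more:gaps} via the fact that the $G$-isomorphism type of $\partial(G,\Gamma_S)$ determines the relative co-amenability class of $\Gamma_S$ in $G=H(\QQ)$. The paper spends exactly one line on this, outsourcing the bridge to Corollary~4.2 of \cite{Monod_furstenberg}; you instead try to sketch that bridge, and this is where your proposal is not yet airtight. The mechanism you lean on --- that $\Prob(\partial(G,H))$ is universal \emph{among strongly proximal} convex compact $G$-spaces with an $H$-fixed point --- does not connect to \Cref{thm:more:gaps} as stated, because the distinguishing space there is the space of measurable maps $\PP^1_\RR\to\Prob(\PP^1_{\QQ_p})$, which is a large function space with no strong proximality in sight, and there is no general procedure for replacing it by a strongly proximal sub-object or quotient that preserves the fixed-point dichotomy. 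You half-acknowledge this, but the fallback you name (``appeal directly to the characterisation of relative co-amenability by the relative Furstenberg boundary'') is not an alternative to the hard step: it \emph{is} the hard step, and it is precisely what the cited Corollary~4.2 supplies.

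The clean route avoids strong proximality entirely and goes through means, i.e.\ through \Cref{lem:relco}\,\eqref{pt: relco:mean}: for subgroups $H_1,H_2<G$ one has $H_1\succeq_G H_2$ if and only if $\Prob(\partial(G,H_1))^{H_2}\neq\varnothing$ (the ``if'' direction being the nontrivial one, resting on the relative injectivity built into the construction of $\partial(G,H_1)$ over $\ell^\infty(G/H_1)$). Granting this, the corollary follows exactly as you intend: for $S\neq S'$ pick, say, $p\in S'\smallsetminus S$; \Cref{thm:all:gaps} gives $\Gamma_S\not\succeq_G\Gamma_{S'}$, hence $\Prob(\partial(G,\Gamma_S))^{\Gamma_{S'}}=\varnothing$, whereas $\Prob(\partial(G,\Gamma_{S'}))^{\Gamma_{S'}}\neq\varnothing$ by construction, so the two boundaries cannot be $G$-isomorphic. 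So: right skeleton, same reduction as the paper, but the one genuinely nontrivial implication is asserted via the wrong universal property rather than proved or correctly cited; replace the strong-proximality argument by the mean criterion (or simply by the citation the paper uses) and the proof closes.
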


The next result goes back to completely general rings and states that the non-amenability of $H(A)$ can be strengthened to an amenability gap with respect to the integral group $\Gamma_\varnothing$. In contrast to all other results stated in this introduction, it will be proved using the relational method introduced in \cite{Monod_PNAS}.

\begin{ithm}\label{thm:co}
If $A<\RR$ is any ring other than $\ZZ$, then $\Gamma_\varnothing$ and a fortiori the Thompson group are not co-amenable in $H(A)$.
\end{ithm}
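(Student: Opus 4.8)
The plan is to run the equivalence-relation argument of \cite{Monod_PNAS} in a \emph{relative} form, comparing the orbit relation of the full group with that of its integral subgroup. Fix the action of $H(A)$ on the projective line $\PP^1 = \RR \cup \{\infty\}$ with its Lebesgue measure class, which is quasi-invariant under every piecewise-projective homeomorphism. The starting point is that this action has the \emph{same} orbits as the projective group: any $g \in H(A)$ agrees near almost every point with some element of $\PSL_2(A)$, and conversely $\PSL_2(A) \se H(A)$, so the orbit equivalence relation $\mathcal{R}_{H(A)}$ on $\PP^1$ coincides with $\mathcal{R}_{\PSL_2(A)}$; likewise $\mathcal{R}_{\Gamma_\varnothing} = \mathcal{R}_{\PSL_2(\ZZ)}$. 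Since $A \neq \ZZ$ forces $A$, and hence $\PSL_2(A)$, to be dense in $\PSL_2(\RR)$, we are exactly in the situation $\mathcal{R}_{\PSL_2(\ZZ)} \se \mathcal{R}_{\PSL_2(A)}$ of a discrete lattice relation sitting inside the relation of a dense subgroup.

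I then record the two amenability inputs that drive the gap. On the one hand $\PSL_2(\ZZ)$ is a lattice in $\PSL_2(\RR)$ and the stabiliser of a point of $\PP^1$ is amenable, so the boundary action is amenable in the sense of Zimmer and $\mathcal{R}_{\PSL_2(\ZZ)}$ is an amenable equivalence relation. On the other hand, density of $\PSL_2(A)$ places us (once $A$ is countable) under the hypotheses of the Carri\`ere--Ghys theorem, which yields that $\mathcal{R}_{\PSL_2(A)}$ is \emph{non-amenable}. This is precisely the dichotomy isolating $A = \ZZ$, where $\PSL_2(\ZZ)$ is discrete and no conclusion is available --- the home of Thompson's group --- from every other ring.

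The engine converting this into an amenability gap is the relational avatar of ``amenable $+$ co-amenable $\Rightarrow$ amenable''. Suppose for contradiction that $\Gamma_\varnothing$ is co-amenable in $H(A)$, with an $H(A)$-invariant mean $m$ on $H(A)/\Gamma_\varnothing$. Pushing $m$ through the orbit maps $g \mapsto g\cdot x$ produces, for almost every $x$, a mean on the $\mathcal{R}_{\PSL_2(\ZZ)}$-classes contained in the $\mathcal{R}_{\PSL_2(A)}$-class of $x$, and this field of means is $\mathcal{R}_{\PSL_2(A)}$-invariant; in other words $\mathcal{R}_{\PSL_2(\ZZ)}$ is co-amenable in $\mathcal{R}_{\PSL_2(A)}$. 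Combining this invariant field of means on the quotient with the Reiter-type witnesses of amenability of the subrelation yields amenability of $\mathcal{R}_{\PSL_2(A)}$, contradicting Carri\`ere--Ghys. This settles the statement whenever $A$ is countable, and the clause on Thompson's group is then automatic: from $F \le \Gamma_\varnothing$, co-amenability of $F$ in $H(A)$ would push forward to co-amenability of $\Gamma_\varnothing$.

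The step I expect to be the genuine obstacle is the passage to \emph{uncountable} rings --- most strikingly $A = \RR$ --- for which the countable-relation framework and the Carri\`ere--Ghys input break down: when $A = \RR$ the group $\PSL_2(A)$ already acts transitively on $\PP^1$, so $\mathcal{R}_{H(A)}$ degenerates to the full relation and carries no information. I would reduce to a countable subring $\ZZ \subsetneq A_0 \se A$ still dense in $\RR$ --- e.g. $A_0 = \ZZ[\alpha]$ for any $\alpha \in A \setminus \ZZ$, dense because a subring of $\RR$ properly containing $\ZZ$ contains either an irrational or some $1/q$ with $q>1$, hence arbitrarily small elements --- and run the argument above inside $H(A_0)$. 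The difficulty is that non-co-amenability does not formally ascend along $\Gamma_\varnothing \le H(A_0) \le H(A)$, since co-amenability of a subgroup need not descend to an intermediate subgroup. I would therefore not argue by descent but build the witness directly, coinducing the convex compact $H(A_0)$-space furnished by the relative construction of \cite{Monod_furstenberg,Bearden-Kalantar} up to an $H(A)$-space $\widetilde C = \mathrm{CoInd}_{H(A_0)}^{H(A)}C$: the identity $\widetilde C^{H(A)} = C^{H(A_0)} = \emptyset$ gives the absence of an $H(A)$-fixed point for free, and the crux is to secure a $\Gamma_\varnothing$-fixed point in $\widetilde C$, i.e. to control the fixed-point sets of $C$ under the subgroups $H(A_0) \cap g^{-1}\Gamma_\varnothing g$ arising in the Mackey decomposition of $\mathrm{Res}_{\Gamma_\varnothing}\widetilde C$.
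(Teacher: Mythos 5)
Your treatment of the countable case is essentially the paper's: same orbit identifications $\mathcal{R}_{H(A)}=\mathcal{R}_{\PSL_2(A)}$ and $\mathcal{R}_{\Gamma_\varnothing}=\mathcal{R}_{\PSL_2(\ZZ)}$, same two inputs (amenability of the lattice relation, Carri\`ere--Ghys for the dense subgroup), and the same transfer principle ``co-amenable subgroup with amenable orbit relation $\Rightarrow$ ambient orbit relation amenable''. The paper implements that transfer (its \Cref{prop:rel:rel}) via Hjorth's characterisation of amenable relations by relation cocycles into $\Homeo(L)$, turning the question into a fixed point in a convex compact space of measurable maps $X\to\Prob(L)$, to which group co-amenability applies directly; your sketch via an invariant field of means on sub-classes plus Reiter witnesses is plausible but is asserted rather than proved, and you would need to actually establish that relation-level lemma.

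The genuine gap is the uncountable case, and you have correctly located it but not closed it. Your coinduction plan ends exactly where the difficulty begins: you concede that ``the crux is to secure a $\Gamma_\varnothing$-fixed point in $\widetilde C$,'' which requires controlling fixed points of $C$ under all the conjugate intersections $H(A_0)\cap g\inv\Gamma_\varnothing g$ over double cosets, and there is no reason these should fix anything in a space built only to have a $\Gamma_\varnothing$-fixed point and no $H(A_0)$-fixed point. As it stands this is a restatement of the problem, not a proof. The paper's resolution is different and is the missing idea: co-amenability is \emph{countably determined} (\Cref{prop:countable}). Assuming $\Gamma_\varnothing$ co-amenable in $H(A)$, one builds inductively a chain of countable subgroups $\Lambda_n$ together with almost-invariant functions $v_n\in\ell^1(G/\Gamma_\varnothing)$ whose supports are swallowed into $\Lambda_{n+1}$, arranging each $\Lambda_{n+1}$ to lie in the cofinal family $\{H(A')\,:\,\ZZ\lneqq A'<A \text{ countable}\}$; any weak-* accumulation point of the $v_n$ is then a $\Lambda$-invariant mean on $\ell^\infty(\Lambda/\Gamma_\varnothing)$ for $\Lambda=\bigcup_n\Lambda_n=H(A'')$ countable with $A''$ dense. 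This produces a countable witness $H(A'')$ in which $\Gamma_\varnothing$ is still co-amenable, contradicting the countable case and avoiding the (false in general) ascent/descent of co-amenability along $\Gamma_\varnothing<H(A_0)<H(A)$ that you rightly refused to use.
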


Finally, we mention that the methods used in this note can easily be adapted to some variants of the above groups. As an example, we show in \Cref{thm:Q} below that the Thompson group is also not co-amenable in a larger group of $C^1$-diffeomorphisms.

\medskip

The considerations of relative co-amenability between various subgroups of a given group $G$ can be formulated using some sort of ``spectrum'' $\spa G$ which is a poset functorially attached to $G$. Loosely speaking, the points of $\spa G$ are defined by subgroups of $G$, where two subgroups are identified whenever they concurrently do or do not admit a fixed point in any given convex compact $G$-space, see \Cref{sec:spa}.

Then \Cref{thm:more:gaps} can be reformulated as stating that the poset of all sets of prime numbers embeds fully faithfully into $\spa G$ for $G=H(\QQ)$. More generally, each $\spa\Gamma_S$ contains a copy of the poset of all subsets of $S$, see \Cref{cor:spa}.

\section{Notation and preliminaries}
\label{sec:not}%
The group $H(\RR)$ consists of all homeomorphisms $h$ of the real line for which the line can be decomposed into finitely many intervals so that on each interval, $h$ is given by $h(x) =g(x)=\dfrac{a x+b}{c x+ d}$ for some $g=\begin{pmatrix}a & b\\ c & d\end{pmatrix}$ in $\SL_2(\RR)$. This $g$ depends on the interval; since we consider homeomorphisms of the line, the singularity $x=-d/c$ cannot lie in the corresponding interval.

The element $g$ is locally uniquely defined in $\PSL_2(\RR)$ but we use matrix representatives in $\SL_2(\RR)$ whenever no confusion is to be feared.

We write the projective line $\PP^1_\RR$ as $\RR \cup \{\infty\}$ using projective coordinates $[x:1]$ for $x\in \RR$ and $\infty = [1:0]$, but we also use the signed symbols $\pm \infty$ to clarify the relative position of points in $\RR$ and how they might converge to $\infty$ in $\PP^1_\RR$.

By restricting the coefficients of the matrix $g$ and the breakpoints in $\RR$, we obtain a wide variety of subgroups of $H(\RR)$:

Given a (unital) ring $A<\RR$ and a set $B\se \RR$ such that $B \cup\{\infty \} \se \PP^1_\RR$ is $\SL_2(A)$-invariant, we denote by $H_B(A)$ the subgroup where matrices are chosen in $\SL_2(A)$ and breakpoints in $B$. A first example is $H_\QQ(\ZZ)$, which is isomorphic to Thompson's group $F$, as observed by Thurston (this fact follows by identifying the Stern--Brocot subdivision tree of $\QQ\cup\{\infty\}$ with the dyadic subdivision tree of $[0,1]$ used in the more common PL definition of $F$ \cite{Imbert}; the proof given in \cite[\S7]{Cannon-Floyd-Parry} is a variant of this argument, with piecewise-projective transformations of $[0,1]$ itself).

The easiest way to construct a piecewise-projective transformation of $\RR$ is to cut $\PP^1_\RR$ in two. Any hyperbolic $g\in \SL_2(A)$ has exactly two fixed points in $\PP^1$ which thus divide $\PP^1_\RR$ into two intervals. Then we can define an element of $H(\RR)$ by the identity on the component containing $\infty$ and by $g$ on the other. For that reason, we worked in \cite{Monod_PNAS} with the set $B$ of all such fixed points, and denoted the resulting group simply by $H(A)$.

It is perhaps even simpler to restrict only the matrix coefficients and work with the group of all piecewise-$\SL_2(A)$ homeomorphisms of the line. In the above notation, this group is $H_\RR(A)$. In the main setting of this article, namely $S$-integers $A=\ZZ[1/S]$, these two conventions coincide anyway:

\begin{lem}\label{lem:break}
Let $S$ be any non-empty set of prime numbers. Then all breakpoints of any element of $H(\RR)$ with matrix coefficients in $\ZZ[1/S]$ are fixed points of hyperbolic elements in $\SL_2(\ZZ[1/S])$.

Thus the group $\Gamma_S$ defined in the introduction coincides with the countable group $H(\ZZ[1/S])$.
\end{lem}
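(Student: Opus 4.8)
The plan is to analyze a single breakpoint $x_0 \in \RR$ of an element $h$ with matrix coefficients in $\ZZ[1/S]$ and to show that $x_0$ is always a fixed point of some hyperbolic element of $\SL_2(\ZZ[1/S])$. Near $x_0$ the homeomorphism $h$ is given by two projective transformations $g_1, g_2$ with representatives in $\SL_2(\ZZ[1/S])$, one on each side, and these differ in $\PSL_2$ since $x_0$ is a genuine breakpoint. Continuity of $h$ at $x_0$ forces $g_1(x_0) = g_2(x_0)$, so applying $g_2\inv$ shows that $g := g_2\inv g_1 \in \SL_2(\ZZ[1/S])$ satisfies $g(x_0)=x_0$ while $g \neq \pm \Id$. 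Thus $g$ is a non-central element with a fixed point in $\PP^1_\RR$, hence either hyperbolic or parabolic.

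If $g$ is hyperbolic there is nothing to prove: $x_0$ is then one of its two fixed points and $g \in \SL_2(\ZZ[1/S])$. The substance of the lemma lies in the parabolic case, which is where I expect the only real work. First I would observe that the fixed point of a parabolic matrix with entries in $\ZZ[1/S]$ is forced to be rational: solving $g(x)=x$ amounts to the quadratic $c x^2 + (d-a)x - b = 0$, whose discriminant equals $\mathrm{tr}(g)^2 - 4 = 0$, so the (double) root $x_0 = (a-d)/2c$ lies in $\QQ$ (and would be $\infty$ if $c=0$, which does not occur for a finite breakpoint). Hence $x_0 \in \PP^1_\QQ$.

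It then remains to exhibit, for any such rational point, a hyperbolic element of $\SL_2(\ZZ[1/S])$ fixing it; this is exactly where the hypothesis $S \neq \varnothing$ enters (and is the case excluded precisely for Thompson's group $\Gamma_\varnothing$). I would pick a prime $p \in S$, so that $p$ is invertible in $\ZZ[1/S]$ and the diagonal matrix $\mathrm{diag}(p, p\inv) \in \SL_2(\ZZ[1/S])$ is hyperbolic, its trace $p + p\inv$ having absolute value $>2$, with fixed points $0$ and $\infty$. Since $\SL_2(\ZZ)$ acts transitively on $\PP^1_\QQ$, I would choose $\gamma \in \SL_2(\ZZ) \se \SL_2(\ZZ[1/S])$ with $\gamma(\infty) = x_0$; then $\gamma\, \mathrm{diag}(p, p\inv)\, \gamma\inv$ is a hyperbolic element of $\SL_2(\ZZ[1/S])$ fixing $x_0$, as required.

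Finally, to deduce the equality of groups, I would note that by definition $\Gamma_S = H_\RR(\ZZ[1/S])$ contains $H(\ZZ[1/S]) = H_B(\ZZ[1/S])$, hyperbolic fixed points being real, while the argument above gives the reverse inclusion, every breakpoint of an element of $\Gamma_S$ lying in the set $B$ of hyperbolic fixed points. Countability is then automatic: each element is specified by finitely many matrices from the countable group $\SL_2(\ZZ[1/S])$ together with finitely many breakpoints from the countable set $B$. I expect the parabolic-case construction to be the conceptual crux; everything else is bookkeeping, and it is the use of $S \neq \varnothing$ that makes every rational breakpoint a hyperbolic fixed point.
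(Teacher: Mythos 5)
Your proposal is correct and follows essentially the same route as the paper: extract $g_2\inv g_1$ fixing the breakpoint, dispose of the hyperbolic case immediately, observe rationality of a parabolic fixed point from the degenerate quadratic, and conjugate $\mathrm{diag}(p,p\inv)$ by an element of $\SL_2(\ZZ)$ moving a fixed point of that diagonal matrix to $x_0$. The only differences are cosmetic (you conjugate $\infty$ rather than $0$ to $x_0$, and you spell out the root $(a-d)/2c$ explicitly).
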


\begin{proof}
Write $A=\ZZ[1/S]$. We first note that since $S$ contains at least some prime $p$, the points $\infty$ and $0$ are fixed points of the hyperbolic element $h=\left(\begin{smallmatrix}p&0\\ 0&1/p\end{smallmatrix}\right)$ of $\SL_2(A)$.

Let now $x\in \RR$ be any breakpoint of an element $g\in \Gamma_S$. Considering the matrix representatives $g_-, g_+\in \SL_2(A)$ describing $g$ on the intervals left and right of $x$, we have $g_- x = g_+ x$ by continuity. Therefore $g_- \inv g_+$ is an element of $\SL_2(A)$ fixing $x$. This element is non-trivial since $x$ is a breakpoint; it is therefore hyperbolic or parabolic. In the first case we are done, and in the second case $x$ is rational because the characteristic polynomial of $g_- \inv g_+$ has only one solution and has its coefficients in $A<\QQ$. Since $\SL_2(\ZZ)$ acts transitively on $\QQ\cup\{\infty\}$, there is $k\in \SL_2(\ZZ)$ with $k.0=x$. Now the conjugate $k h k\inv$ is a hyperbolic element of $\SL_2(A)$ fixing $x$.
\end{proof}

Next, we recall the Frankensteinian cut-and-paste from \cite{Monod_PNAS}. Since we aim for elementary proofs of non-amenability, we give both the basic dynamical explanation and the explicit algebraic computation from \cite[Prop.~9]{Monod_PNAS} (especially since the latter has some ``$\neq$'' instead of ``$=$''  in the journal version).

\begin{prop}\label{prop:cut-paste}
For every $g\in \SL_2(\RR)$ and $x\in \RR$ with $gx \neq \infty$, there is a piecewise-projective homeomorphism $h$ of $\RR$ which coincides with $g$ on a neighbourhood of $x$.

Moreover, one can take $h\in H(A)$ whenever $A<\RR$ is a ring containing the coefficients of $g$.
\end{prop}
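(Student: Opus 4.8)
The plan is to separate the trivial affine case from the interesting one and, in the latter, to repair the single defect of $g$ as a self-map of $\RR$ --- namely that it moves the point $\infty$ --- by grafting $g$ onto the identity along the two fixed points of a suitable hyperbolic element. First I would dispose of $c=0$: then $g$ is affine, hence already a homeomorphism of $\RR$, and $h=g$ works (and lies in $H(A)$, its only matrix being $g\in\SL_2(A)$). So assume $c\neq 0$; then $g$ has a unique pole $x_0=-d/c$ on the line, with $x\neq x_0$ because $gx\neq\infty$, and $g$ is strictly increasing (derivative $1/(ct+d)^2$) on each of the two pole-free rays $(-\infty,x_0)$ and $(x_0,+\infty)$. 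Let $J$ be the one of these rays that contains $x$. Dynamically, $g$ is an orientation-preserving homeomorphism of the circle $\PP^1_\RR$, and the idea is to post-compose it with a translation so that the result becomes hyperbolic with two finite fixed points bracketing $x$ inside $J$.

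Concretely, for $r\in\ZZ$ I set $\ell(t)=t+r$ and $\eta=\ell\inv g$, whose matrix is $\left(\begin{smallmatrix} a-rc & b-rd\\ c & d\end{smallmatrix}\right)\in\SL_2(A)$. The fixed points of $\eta$ are exactly the solutions of $g(t)=t+r$, i.e.\ the zeros of $\phi(t):=g(t)-t-r$. I would record that $g(t)-t$ tends to $-\infty$ at both ends of $(x_0,+\infty)$ (where $g\to a/c$ at $+\infty$ and $g\to-\infty$ as $t\to x_0^+$) and to $+\infty$ at both ends of $(-\infty,x_0)$. Hence, choosing any integer $r$ strictly below (resp.\ above) the finite value $g(x)-x$ according to whether $J=(x_0,+\infty)$ or $J=(-\infty,x_0)$, the intermediate value theorem produces one zero of $\phi$ on each side of $x$ inside $J$; call them $u<x<v$. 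Being the two distinct real solutions of the quadratic $g(t)=t+r$, they are the two fixed points of $\eta$, so $\eta$ is hyperbolic and $[u,v]\subseteq J$ avoids both $x_0$ and $\infty$.

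Finally I would define $h$ to be $g$ on $[u,v]$ and $\ell$ outside it --- equivalently $h=\ell\circ\hat\eta$, where $\hat\eta$ is the standard graft equal to $\eta$ on $[u,v]$ and to the identity elsewhere, which is legitimate because $\eta$ fixes the endpoints $u,v$ (and maps $[u,v]$ onto itself) while $\infty$ lies outside $[u,v]$. Continuity at $u,v$ is exactly the matching $g(u)=\ell(u)$, $g(v)=\ell(v)$ built into the choice of $u,v$; together with the strict monotonicity of each piece this makes $h$ a homeomorphism of $\RR$, and on the whole interval $(u,v)\ni x$ one has $\ell\eta=\ell\ell\inv g=g$, so $h$ coincides with $g$ on a neighbourhood of $x$. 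This already proves the first assertion.

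For the refinement every ingredient stays in $A$: the outer matrix is $\ell\in\SL_2(\ZZ)\subseteq\SL_2(A)$ and the middle one is $g\in\SL_2(A)$, while the two breakpoints $u,v$ are by construction the fixed points of the hyperbolic element $\eta\in\SL_2(A)$ and therefore lie in the set $B$; hence $h\in H_B(A)=H(A)$. Note that $\ZZ\subseteq A$ for every unital ring $A<\RR$, so an admissible integer $r$ always exists, which is what makes the argument uniform in $A$ and bypasses any density hypothesis. The one step that genuinely needs care --- and, I suspect, the site of the ``$\neq$'' slip in the journal version --- is the bookkeeping of the inequalities pinning $u<x<v$ to the correct pole-free ray and keeping $\infty$ outside $[u,v]$; once the boundary behaviour of $g(t)-t$ on $J$ is recorded, this is immediate.
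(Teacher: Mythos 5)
Your proof is correct and is essentially the paper's own argument: it is precisely the ``dynamical explanation'' given there (graft $g$ onto a translation $t\mapsto t+r$ along the two points where $g(t)=t+r$), and the element $h$ you build coincides with the one from the paper's algebraic proof, since your $\eta=\ell\inv g$ is exactly the conjugate $g\inv q_0 g$ whose fixed points the paper identifies as the breakpoints. The only (harmless) difference is that you locate the two fixed points $u<x<v$ by the intermediate value theorem applied to $g(t)-t-r$ on the pole-free ray, whereas the paper does the equivalent bookkeeping on the target side via explicit eigenvalues and the asymptotics of the eigenvectors as $n\to\pm\infty$.
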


\begin{proof}[Dynamical explanation]
The homeomorphism $h$ of $\RR$ is obtained as follows. Keep $g$ on a small interval around $x$ and continue everywhere else with some translation $y \mapsto y +n$. Thus we just need our interval boundaries to be two points left and right of $x$ where $g$ coincides with the translation by $n$. The basic dynamics of projective transformations such as $g$ imply that for \emph{any} large enough $n\in \RR$, there are exactly two such points where this holds for $n$ or possibly $-n$. The only exception is if $g$ was already affine, in which case the proposition is trivial.
\end{proof}

We now give a completely explicit algebraic proof to keep track of the coefficients involved, following \cite{Monod_PNAS}. The above translation by $n$ will now correspond to a column operation on the matrix for $g$ and therefore we take $n\in\ZZ$ to remain in $A$.

\begin{proof}[Algebraic proof]
We can assume $g\infty \neq \infty$. Claim: there is $q_0\in \SL_2(A)$ with $q_0 \infty = g \infty$ and with two fixed points $\xi_-, \xi_+\in\RR\se \PP^1_{\RR}$ such that the open interval of $\RR$ determined by $\xi_-, \xi_+$ contains $g x$ but not $g \infty$.

To deduce the proposition from this claim, consider the piecewise-$\SL_2(A)$ homeomorphism $q$ of $\PP^1_{\RR}$ which is the identity on the above interval and is given by $q_0$ on its complement. This is indeed a homeomorphism since the interval is defined by fixed points of $q_0$. Now $h=q\inv g$ fixes $\infty$ and is the desired element $h\in H(A)$. (Notice that the breakpoints of $h$ are $g\inv\xi_\pm$, which are the fixed points of the hyperbolic matrix $g\inv q_0 g$.)

To prove the claim, represent $g$ as $\begin{pmatrix}a & b\\ c & d \end{pmatrix}$ and define $q_0$ by $\begin{pmatrix}a & b + n a\\ c & d + n c \end{pmatrix}$, where $n\in \ZZ$ is an integer yet to be chosen. Then $q_0 \infty = [a:c]= g \infty$ holds. Moreover, $gx \neq \infty$ forces $c\neq 0$ and we can hence assume $c>0$. Therefore, the trace
\[
\tau = a + d + n c \in A
\]
satisfies $\tau^2>4$ as soon as $|n|$ is large enough. Then we have two real eigenvalues $\lambda_\pm= \frac12 (\tau\pm\sqrt{\tau^2-4})$ and the corresponding eigenvectors $\begin{pmatrix}x_\pm\\ c\end{pmatrix}$, where $x_\pm=\lambda_\pm-d-n c$, represent the points $\xi_\pm=[x_\pm:c]$. Now we compute $\lim_{n\to+\infty} \xi_-=-\infty$ and $\lim_{n\to+\infty} \xi_+=[a:c]=g\infty$, with the latter limit approaching from the left because $x_+ < a$. Conclusion: in case $g x < g \infty$, any sufficiently large $n$ will ensure
\[
\xi_-  \ < \   g x \ < \   \xi_+ \ < \   g \infty,
\]
which yields the claim for that case. In the other case, when $g x > g \infty$, the result is obtained in the exact same way but with $n \to-\infty$.
\end{proof}

\section{Amenability and fixed points}

\subsection{Convex compact spaces}
If our goal is to give a simple and transparent proof of non-amenability, we should choose which one of the many equivalent definitions of amenability to use:

\begin{defi}\label{defi:amen}
A group $G$ is \textbf{amenable} if every convex compact $G$-space $K\neq \varnothing$ admits a fixed point.
\end{defi}

It is implicit in this definition that $G$ acts on $K$ by affine homeomorphisms and that the convex compact structure of $K$ is induced by some ambient locally convex Hausdorff topological vector space containing $K$.

\itshape However, our starting point is a very concrete example of $K$ without fixed point, maybe the simplest and best-known such example:\upshape

\begin{exam}[Measures on the projective line]\label{exam:P1}
Let $K=\Prob(\PP^1_\RR)$ be the space of probability measures on $\PP^1_\RR$, endowed with the weak-* topology. This is the pointwise convergence topology when measures are seen as functionals on the space $C$ of continuous functions on $\PP^1_\RR$. Then $K$ is a convex compact $G$-space for $G=\SL_2(\RR)$ and it admits no $G$-fixed point. In fact, if $g\in G$ is a hyperbolic matrix, then any probability measure fixed by $g$ is supported on the two points $\xi_\pm\in \PP^1_\RR$ fixed by $g$, namely the two points defined by the eigenvectors of $g$.  More precisely, given any $x \neq \xi_\pm$, $g^n x$ converges to the eigenvector with largest eigenvalue as $n\to\infty$ (this is particularly clear after diagonalising $g$). Since this forces the same convergence for any compact interval in $\PP^1_\RR \smallsetminus\{\xi_\pm\}$, it implies the statement for measures.

In conclusion, no point of $K$ can be fixed simultaneously by two hyperbolic matrices without common eigenvector. This classical ``ping-pong'' argument shows much more: suitable powers of two such matrices freely generate a free group.
\end{exam}

\begin{rem}\label{rem:P1:cont}
The definition of amenability is generalised to topological groups by requiring the action in \Cref{defi:amen} to be jointly continuous, i.e. the map $G\times K\to K$ must be continuous. This is therefore a weaker condition than the amenability of the underlying abstract group $G$. The action in \Cref{exam:P1} is jointly continuous for the usual topology on $G$. It therefore witnesses that $G$ is non-amenable also when viewed as a topological group. 
\end{rem}

\begin{exam}[Measures on the $p$-adic projective line]\label{exam:P1:Qp}
The exact same arguments from \Cref{exam:P1} hold over $\QQ_p$ for any prime $p$. This shows that $K=\Prob(\PP^1_{\QQ_p})$ is a convex compact $G$-space without fixed point for $G=\SL_2(\QQ_p)$. Note however that the subgroup $\SL_2(\ZZ_p)$ admits a fixed point in $K$ because it is a compact group in the $p$-adic topology for which the action is continuous.
\end{exam}


Next we introduce our only other source of convex compact spaces: a construction that takes a convex compact space $K_0$ and yields a new space $K$ by considering $K_0$-valued maps. We will only apply this construction to the case where $K_0$ is given by \Cref{exam:P1} or \Cref{exam:P1:Qp}.

\begin{exam}[Spaces of functions]\label{exam:f}
Start with a convex compact $G$-space $K_0$ and define $K$ to be the space of all measurable maps $f\colon \PP^1_\RR\to K_0$, where two maps are identified when they agree outside a null-set. 

To define the compact topology on $K$, we assume for simplicity that $K_0$ is metrisable and realised in the dual $C'$ of some Banach space $C$ with the weak-* topology. This is the case of \Cref{exam:P1} and \Cref{exam:P1:Qp}, namely $C=C(\PP^1)$ is the space of continuous functions on the projective line and $C'$ the space of measures. Now $K$ is endowed with the weak-* topology obtained by viewing it in the dual of $L^1(\PP^1_\RR, C)$. The compactness then follows from Alaoglu's theorem.
%
\end{exam}

(Even beyond the simple needs of the present article, the above assumptions would not be restrictive: metrisability is not needed, and the realisation in some $C'$ can always be obtained by taking $C$ to be the space of continuous affine functions on $K$.)

\subsection{Piecewise action on maps}
\label{sec:PW:action}%
We now endow the convex compact spaces of maps from \Cref{exam:f} with group actions. If $K_0$ is a convex compact $G$-space and moreover $G$ acts on $\PP^1_\RR$, then $G$ acts on $f\colon \PP^1_\RR\to K_0$ by $(g.f)(x) = g  ( f(g\inv x) )$, where $x\in \PP^1_\RR$. Thus $f\in K$ is a $G$-fixed point if and only if $f$ is $G$-equivariant as a map. It is understood here that the $G$-action on $\PP^1_\RR$ is supposed \emph{non-singular}, that is, preserves null-sets. This ensures that the $G$-action is both well-defined and by homeomorphisms. 

If for instance $G=\SL_2(\RR)$ and $K_0=\Prob(\PP^1_\RR)$, then this $G$-action on $K$ admits a fixed point, namely the map sending $x$ to the Dirac mass at $x$.

\medskip

The crucial interest of \Cref{exam:f} is that the action defined above makes sense more generally for \emph{piecewise} groups:

Let $H$ be a group of piecewise-$\SL_2(A)$ transformations of $\PP^1_\RR$, where $A<\RR$ is any subring. At this point it is not even important that $h\in H$ should be a homeomorphism; we only need to know that the interior points of the ``pieces'' cover  $\PP^1_\RR$ up to a null-set, which is notably the case if we have finitely many intervals as pieces. It is then well-defined to consider the projective part $h|_x \in \PSL_2(A)$ of $h$ at $x\in \PP^1_\RR$, except for a null-set in $\PP^1_\RR$ that we shall ignore. Notice that for $h, h'\in H$ we have the chain rule $(h h')|_x = (h|_{h' x})(h'|_x)$.

Given now a convex compact $\PSL_2(A)$-space $K_0$, we define the $H$-action on the space $K$ of measurable maps $f\colon \PP^1_\RR\to K_0$ by
\[
(h.f)(x) = h|_{h\inv x}  ( f(h\inv x) ), \kern5mm x\in \PP^1_\RR. 
\]
This $H$-action on $K$ is perfectly suited to the cut-and-paste method recalled in \Cref{sec:not}. Indeed, noting that the chain rule gives $h|_{h\inv x} = (h\inv|_x)\inv$, we see that $f$ is $H$-fixed if and only if
\[
 f(h x) = h|_{x} (  f(x) ) \kern5mm \text{for all $h\in H$ and a.e. $x\in \PP^1_\RR$.}
\]
The key point is that this equation only involves the \emph{local} behaviour of $h$ at $x$. Therefore, it immediately implies the following.

\begin{prop}\label{prop:Frank:fixed}
Suppose that for every $g\in \PSL_2(A)$ and almost every $x\in \PP^1_\RR$ there is $h\in H$ with $h|_x=g$.

Then $f\in K$ is $H$-fixed if and only if it is $\PSL_2(A)$-fixed.
\end{prop}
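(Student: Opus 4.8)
The plan is to translate both implications into the pointwise a.e.-identities already derived above and to let the hypothesis mediate between the \emph{global} transformations $g \in \PSL_2(A)$ and the \emph{local} parts $h|_x$ of elements of $H$. Recall that $f \in K$ is $H$-fixed precisely when $f(hx) = h|_x(f(x))$ for every $h \in H$ and a.e.\ $x \in \PP^1_\RR$, whereas $f$ is $\PSL_2(A)$-fixed (under the genuine action $(g.f)(x) = g(f(g\inv x))$) precisely when $f(gx) = g(f(x))$ for every $g \in \PSL_2(A)$ and a.e.\ $x$. The task is thus to prove that these two systems of a.e.-identities are equivalent under the stated hypothesis.

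First I would dispatch the implication from $\PSL_2(A)$-fixed to $H$-fixed, which in fact requires no hypothesis at all and is pure locality. Fix $h \in H$. As $h$ is piecewise-$\SL_2(A)$ with finitely many pieces, $\PP^1_\RR$ decomposes up to a null-set into finitely many intervals $P_1,\dots,P_m$ on each of which $h$ agrees with a single $g_i \in \PSL_2(A)$; thus $h|_x = g_i$ and $hx = g_i x$ for a.e.\ $x \in P_i$. Applying the equivariance of $f$ for the \emph{fixed} element $g_i$ gives $f(hx) = f(g_i x) = g_i(f(x)) = h|_x(f(x))$ for a.e.\ $x \in P_i$; unioning over the finitely many pieces (and the finitely many associated null exceptional sets) yields the local equation for $h$, hence $H$-fixedness.

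The converse is where the hypothesis enters, and it carries the one genuine subtlety, which I expect to be the main obstacle: the order of the quantifiers. Fix $g \in \PSL_2(A)$ and set $U_h = \{x : h|_x = g\}$ for each $h \in H$, a finite union of intervals; the hypothesis says exactly that $\bigcup_h U_h$ is co-null. On $U_h$ one has $h|_x = g$ and $hx = gx$, so the local equation for the fixed $h$ rewrites as $f(gx) = g(f(x))$ for a.e.\ $x \in U_h$. The difficulty is that the hypothesis reads ``for a.e.\ $x$ there exists $h$'' while $H$-fixedness reads ``for each $h$, a.e.\ $x$'', and the null exceptional set attached to each $h$ cannot be unioned over a possibly uncountable index set. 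The remedy is to pass to a countable essential subcover: since $\PP^1_\RR$ carries a finite (hence $\sigma$-finite) measure, a standard exhaustion argument produces countably many $h_1, h_2, \dots \in H$ with $\bigcup_n U_{h_n}$ still co-null, on which $f(gx) = g(f(x))$ then holds almost everywhere; as $g$ was arbitrary, $f$ is $\PSL_2(A)$-fixed. (For countable $H$, as with every $\Gamma_S$, this last step is automatic, and the remaining translation is routine bookkeeping with the chain rule and the piecewise structure.)
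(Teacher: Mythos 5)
Your proof is correct and follows the same route as the paper: the nontrivial direction transfers the local fixed-point equation $f(hx)=h|_x(f(x))$ to $f(gx)=g(f(x))$ using the hypothesis to supply, for given $g$ and a.e.\ $x$, an $h$ with $h|_x=g$, while the other direction is the finite piecewise decomposition that the paper dismisses as tautological. The one place you go beyond the paper's one-line argument --- extracting a countable subfamily $h_1,h_2,\dots$ so that the $h$-dependent null exceptional sets can be unioned --- is a genuine and welcome refinement: the paper's proof fixes a single $x$ and a single $h$ and leaves this quantifier-order issue implicit (harmless in its applications, where $H$ is countable, but really needed for uncountable $H$), and since each $U_h$ is a finite union of intervals, the Lindel\"of property of $\PP^1_\RR$ already yields your countable subcover without even appealing to finiteness of the measure.
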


\begin{proof}
Suppose that $f$ is $H$-fixed. We want to show, given $g\in\PSL_2(A)$ and $x\in \PP^1_\RR$, that $f(g x) = g f(x)$ holds. The element $h$ of the assumption satisfies $f(h x) = h|_{x} f(x)$, which is exactly what is claimed since $h|_x=g$. The converse is tautological.
\end{proof}

\subsection{The fundamental non-amenability argument}
\label{sec:basic: non-amen}%
We already have all the elements to deduce that many piecewise-projective groups $H(A)$ are non-amenable. We begin with the cases of $A=\ZZ[1/p]$ and more generally of $S$-integral coefficients.

\begin{thm}\label{thm:basic:non-amen}
Let $S$ be any non-empty set of prime numbers and choose $p\in S$.

Then the group $\Gamma_S = H(\ZZ[1/S])$ has no fixed point in the convex compact space $K$ of measurable maps $\PP^1_\RR \to \Prob(\PP^1_{\QQ_p})$.

In particular, this group is non-amenable.
\end{thm}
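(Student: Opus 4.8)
The plan is to combine Proposition~\ref{prop:Frank:fixed} with the cut-and-paste Proposition~\ref{prop:cut-paste} to reduce the existence of a $\Gamma_S$-fixed point in $K$ to the existence of an $\SL_2(\ZZ[1/S])$-fixed point in $K$, and then to rule out the latter using the $p$-adic dynamics from Example~\ref{exam:P1:Qp}. Throughout I set $A = \ZZ[1/S]$, so that $\Gamma_S = H(A)$ by Lemma~\ref{lem:break}, and $K_0 = \Prob(\PP^1_{\QQ_p})$ on which $\PSL_2(A)$ acts through its inclusion $A < \QQ_p$. The key observation is that the hypothesis of Proposition~\ref{prop:Frank:fixed} holds for $H = H(A)$: given $g \in \PSL_2(A)$ and almost every $x \in \PP^1_\RR$ (in particular any $x$ with $g x \neq \infty$), Proposition~\ref{prop:cut-paste} produces an element $h \in H(A)$ agreeing with $g$ on a neighbourhood of $x$, so that $h|_x = g$.

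First I would verify that Proposition~\ref{prop:Frank:fixed} applies, which immediately gives that any $\Gamma_S$-fixed map $f \in K$ is in fact $\PSL_2(A)$-equivariant, i.e. $f(g x) = g\,f(x)$ for all $g \in \PSL_2(A)$ and a.e.\ $x$. Next I would argue that no such equivariant map can exist. Suppose $f$ were $\PSL_2(A)$-fixed. Consider the hyperbolic element $h = \left(\begin{smallmatrix} p & 0 \\ 0 & 1/p \end{smallmatrix}\right) \in \SL_2(A)$, which has fixed points $0$ and $\infty$ in $\PP^1_\RR$. Equivariance forces $f(x)$, for a.e.\ $x$, to be a probability measure on $\PP^1_{\QQ_p}$ that is invariant under the stabiliser of $x$ in $\PSL_2(A)$; taking a second hyperbolic element with a distinct pair of fixed points and invoking the ping-pong analysis of Example~\ref{exam:P1:Qp}, the values $f(x)$ would have to be supported on ever-smaller finite sets and ultimately cannot be consistently defined.

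The main obstacle, and the step requiring genuine care, is passing from equivariance under individual matrices to the contradiction in the $p$-adic target $K_0$. The clean way to do this is to exploit that $\SL_2(A)$ is \emph{dense} in $\SL_2(\QQ_p)$ (for $A = \ZZ[1/S]$ with $p \in S$), together with the joint continuity of the $\SL_2(\QQ_p)$-action on $K_0 = \Prob(\PP^1_{\QQ_p})$. Fixing any $x_0 \in \PP^1_\RR$ with trivial (or sufficiently small) stabiliser in $\PSL_2(A)$ and setting $\mu = f(x_0) \in K_0$, the equivariance relation $f(g x_0) = g\,\mu$ shows that the single measure $\mu$ determines $f$ on the orbit of $x_0$; but the same $\mu$ must then be stabilised by any $g \in \PSL_2(A)$ fixing $x_0$, and by density this stabiliser condition propagates to a nontrivial hyperbolic element of $\SL_2(\QQ_p)$. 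Since no probability measure on $\PP^1_{\QQ_p}$ is invariant under a nontrivial hyperbolic element together with the full approximating family—precisely the content of Example~\ref{exam:P1:Qp}—we obtain the desired contradiction. The final sentence of the theorem, non-amenability, is then immediate from Definition~\ref{defi:amen}, since we have exhibited a nonempty convex compact $\Gamma_S$-space with no fixed point.
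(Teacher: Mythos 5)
Your first step is exactly the paper's: \Cref{prop:cut-paste} verifies the hypothesis of \Cref{prop:Frank:fixed}, so the problem reduces to showing that $\SL_2(A)$, $A=\ZZ[1/S]$, has no fixed point in $K$. The gap is in how you rule out an $\SL_2(A)$-equivariant measurable map $f\colon \PP^1_\RR\to\Prob(\PP^1_{\QQ_p})$. Your argument hinges on the stabiliser of a point $x_0$ constraining $\mu=f(x_0)$, but for almost every $x_0$ that stabiliser is \emph{trivial}: the nontrivial elements of the countable group $\PSL_2(A)$ each fix at most two points of $\PP^1_\RR$, so the union of their fixed-point sets is countable, hence null. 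On the conull invariant set where equivariance holds, the relation $f(gx_0)=g\,f(x_0)$ therefore imposes no pointwise condition whatsoever --- one can define a set-theoretic equivariant map by choosing $f$ arbitrarily on a transversal of the orbits. The obstruction is to the existence of a \emph{measurable} such map, and no pointwise/stabiliser analysis can detect it. The appeal to density of $\SL_2(A)$ in $\SL_2(\QQ_p)$ also cannot be made to work as stated: to pass to a limit you would need $f$ to be continuous and the approximating elements $g_n\to g$ to converge in $\SL_2(\RR)$ so that $g_nx_0$ converges in $\PP^1_\RR$; neither holds (the $g_n$ converge only $p$-adically, and $f$ is merely measurable). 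Likewise, "invariance under a nontrivial hyperbolic element" is not a contradiction in $\Prob(\PP^1_{\QQ_p})$ --- the uniform measure on its two fixed points is invariant; \Cref{exam:P1:Qp} only excludes measures fixed by all of $\SL_2(\QQ_p)$ (or by two hyperbolics in general position).

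The paper's proof supplies the missing idea by an induction argument: the diagonal embedding realises $\Lambda=\SL_2(\ZZ[1/p])$ as a lattice in $G=\SL_2(\RR)\times\SL_2(\QQ_p)$ (\Cref{rem:reduction}); the $\Lambda$-action on $K$ extends to a continuous $G$-action by letting the real factor act on the source variable and the $p$-adic factor on the target measures; a $G$-fixed point would be an essentially constant map whose value is $\SL_2(\QQ_p)$-fixed, which \Cref{exam:P1:Qp} forbids; and a putative $\Lambda$-fixed point would yield a $G$-fixed point by integrating the orbit map over the finite-volume quotient $G/\Lambda$. This averaging over $G/\Lambda$ is precisely the step that converts the measure-theoretic obstruction into a usable fixed-point statement, and it is absent from your proposal.
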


\begin{proof}
It suffices to consider $S=\{p\}$. The cut-and-paste principle of \Cref{prop:cut-paste} shows that we are in the setting of \Cref{prop:Frank:fixed}. Thus it suffices to show that $\SL_2(\ZZ[1/p])$ has no fixed point in the space $K$ of maps $f\colon \PP^1_\RR \to \Prob(\PP^1_{\QQ_p})$.

We write $\Lambda = \SL_2(\ZZ[1/p])$, $G_\RR=\SL_2(\RR)$, $G_{\QQ_p} = \SL_2(\QQ_p)$ and $G=G_\RR \times G_{\QQ_p}$. By elementary reduction theory (recalled in \Cref{rem:reduction} below), the diagonal image of $\Lambda$ in $G$ is discrete and of finite covolume.

We extend the $\Lambda$-action on $K$ to a $G$-action by
\[
((g_1, g_2).f)(x) = g_2 ( f(g_1\inv x)), \kern5mm g_1\in G_\RR, g_2 \in G_{\QQ_p}.
\]
This is a continuous action without fixed points: a map fixed by $G_\RR$ would be constant and its constant value would be $G_{\QQ_p}$-fixed, but $G_{\QQ_p}$ has no fixed points in $\Prob(\PP^1_{\QQ_p})$, see \Cref{exam:P1:Qp}. However, any point fixed by $\Lambda$ would yield another point fixed by $G$ after integration over the quotient $G/\Lambda$. Explicitly, if $f\in K$ is $\Lambda$-fixed, then the orbit map $g\mapsto g.f$ descends to $G/\Lambda$ and hence $\int_{G/\Lambda} g.f\, d(g\Lambda)\in K$ is $G$-fixed.
\end{proof}

\begin{rem}[Reduction theory]\label{rem:reduction}
Since the proof of \Cref{thm:basic:non-amen} used that $\Lambda$ is discrete and of finite covolume in $G_\RR\times G_{\QQ_p}$, it is worth recalling that this is elementary:

Discreteness holds because $\ZZ[1/p]$ is discrete in $\RR \times \QQ_p$ by definition of the $p$-adic topology. As for a fundamental domain, we can take $D\times \SL_2(\ZZ_p)$ whenever $D$ is a fundamental domain for the modular group $\SL_2(\ZZ)$ in $\SL_2(\RR)$. Indeed, $\SL_2(\ZZ_p)$ is a compact-open subgroup of $G_{\QQ_p}$ (again because the corresponding fact holds for $\ZZ_p$ in $\QQ_p$ by definition of the topology) and ${\Lambda \cap \SL_2(\ZZ_p)}$ is $\SL_2(\ZZ)$. Finally, a very explicit domain $D$ of finite volume is familiar since Gauss, namely the pre-image in $\SL_2(\RR)$ of the strip $|{\mathrm{Re}(z)| \leq 1/2}$, ${|z|\geq 1}$ in the upper half-plane.

The case of other number fields, as in \Cref{exam:places} below, is handled by restriction of scalars, see e.g. Siegel, Satz~12 p.~233 in \cite{Siegel39}. (The generalisation to arbitrary reductive groups, which soars far beyond our needs, is the Borel--Harish-Chandra theorem \cite[Thm.~12.3]{Borel-Harish-Chandra}, \cite[\S8]{Borel63}.)
\end{rem}

The proof of \Cref{thm:basic:non-amen} is based on the $p$-adic completion of the number field $\QQ$. It holds exactly the same way for the two other types of places, namely real and complex completions:

\begin{exam}[Other places]\label{exam:places}
The argument given above for the ring $A=\ZZ[1/p]$ can be applied to any other ring $A$ of $S$-integers (or integers) in any other real number field $F<\RR$, except $A=\ZZ$. Indeed, when we used $\QQ_p$ in the proof of \Cref{thm:basic:non-amen}, we only needed some completion of $F=\QQ$ \emph{in addition} to the given completion $\RR$ used to define the action on the variable $x\in\PP^1_\RR$.

\smallskip
In particular, this also works if the second completion happens to be $\RR$ as well. For instance, consider $A=\ZZ[\sqrt 2]$, which is the ring of integers of $\QQ(\sqrt2)$ \cite[\S2.5]{Samuel_ANT}. We let $\Lambda=\SL_2(A)$ act on $\Prob(\PP^1_{\RR})$ via the \emph{second} embedding $\Lambda \to G_\RR$ given by the negative root of two. Reasoning exactly as above, the action of $\Lambda$ on the space $K$ of maps $\PP^1_\RR \to \Prob(\PP^1_{\RR})$ has no fixed points because $\Lambda$ is a lattice in $G_\RR \times G_\RR$. Therefore, \Cref{prop:Frank:fixed} shows that $H(A)$ has no fixed points in $K$ and in particular it is non-amenable. 

\smallskip
Likewise, if a real number field $F<\RR$ is not totally real, it can happen that the only other Archimedean completion is complex. This is the case for instance of pure cubic fields, e.g.\ $F=\QQ(\sqrt[3]2)$. Denoting its ring of integers by $\mathscr{O}_F$, we can thus use that $H(\mathscr{O}_F)$ has no fixed points in the convex compact space of maps $\PP^1_\RR \to \Prob(\PP^1_{\CC})$.
\end{exam}

\begin{rem}[Produced spaces]
It is well-known that if a (discrete) group $\Gamma$ contains a non-amenable group $\Lambda<\Gamma$, then $\Gamma$ is also non-amenable. With $\Gamma=H(\RR)$ in mind, the reader might ask: does this fact also admit an elementary proof in terms of fixed points in convex compact spaces? The question is legitimate since this fact has no analogue for general topological groups, whereas the fixed-point condition is completely general.

There is indeed such a direct argument. Let $K_0\neq\varnothing$ be a convex compact $\Lambda$-space without fixed points. Define the \textbf{produced space} $K$ by
\[
K = \big\{ f\colon \Gamma \to K_0 : f(h g) = h f(g) \ \forall\, h\in \Lambda, g\in\Gamma \big\}
\]
which is convex and compact in the product topology, i.e.\ the topology of pointwise convergence for maps $\Gamma \to K_0$. This is a $\Gamma$-space for the right translation action $(\gamma f)(g) = f (g\gamma)$. Now any $\Gamma$-fixed point $f$ in $K$ would be constant, and by construction the constant value $f(g)$ would be a $\Lambda$-fixed point in $K_0$.

Our only scruple could be to verify that $K$ is non-empty; we note that any transversal $S\se \Gamma$ for $\Lambda$ gives by restriction an isomorphism $K \cong K_0^S$.
\end{rem}

\section{Refining and strengthening non-amenability}
\subsection{Relativisation}
\label{sec:less:amen}%
A key concept here is the notion of \textbf{co-amenability} for a subgroup $G_1<G$ of a group $G$, due to Eymard \cite{Eymard72}. Informally, it means that $G$ is ``as amenable as $G_1$'', or ``amenable conditioned on $G_1$ being so''. Neither $G_1$ nor $G$ need be amenable when the question of co-amenability is raised. If for instance $G_1$ is a normal subgroup of $G$, then co-amenability amounts simply to the amenability of the quotient group $G/G_1$.

To motivate the general definition, recall that a group $G$ is amenable iff every non-empty convex compact $G$-set admits a $G$-fixed point.

\begin{defi}\label{defi:co-amen}
A subgroup $G_1<G$ of a group $G$ is \textbf{co-amenable} in $G$ if every convex compact $G$-set with a $G_1$-fixed point admits a $G$-fixed point.
\end{defi}

This notion, which makes sense in the generality of topological groups and their subgroups, has been extensively studied by Eymard \cite{Eymard72} in the context of locally compact groups.

\medskip
There is a further generalisation (\cite[\S2.3]{Portmann_PhD}, \cite[\S7.C]{Caprace-Monod_rel}) that compares two subgroups $G_1, G_2<G$ that are not necessarily nested:

\begin{defi}
The subgroup $G_1$ is \textbf{co-amenable to $G_2$ relative to $G$} if every convex compact $G$-space which has a $G_1$-fixed point also has a $G_2$-fixed point.
\end{defi}

Again, this definition extends to arbitrary topological groups simply by requiring that all actions on convex compact spaces be jointly continuous.

Back to the discrete case, we record the following standard equivalences.

\begin{lem}\label{lem:relco}
Given two subgroups $G_1, G_2$ of a group $G$, the following are equivalent:
\begin{enumerate}[(i)]
\item $G_1$ is co-amenable to $G_2$ relative to $G$;\label{pt: relco}
\item the $G_2$-action on $G/G_1$ admits an invariant mean;\label{pt: relco:mean}
\item the restriction to $G_2$ of the quasi-regular $G$-representation $\lambda_{G/G_1}$ weakly contains the trivial representation.\label{pt: relco:rep}
\end{enumerate}
\end{lem}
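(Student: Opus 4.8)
The plan is to prove the implications $\text{(i)}\Rightarrow\text{(ii)}$, $\text{(ii)}\Rightarrow\text{(i)}$ and the separate equivalence $\text{(ii)}\Leftrightarrow\text{(iii)}$, so that all three conditions become equivalent; the first two steps are ``geometric'' fixed-point arguments and the last is the classical passage between invariant means and almost-invariant vectors. For $\text{(i)}\Rightarrow\text{(ii)}$ I would test the relative co-amenability hypothesis on a single universal space, the set $M$ of means on $\ell^\infty(G/G_1)$ with its weak-$*$ topology as a subset of the dual of $\ell^\infty(G/G_1)$. This $M$ is convex and compact by Alaoglu's theorem, and $G$ acts continuously and affinely by $(g\cdot m)(F)=m(g\inv\cdot F)$, where $(g\cdot F)(x)=F(g\inv x)$. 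The Dirac mean $\delta_{eG_1}$ at the base coset is $G_1$-fixed, since $h\cdot eG_1=eG_1$ for every $h\in G_1$. By condition~(i), $M$ then admits a $G_2$-fixed point, which is precisely a $G_2$-invariant mean on $G/G_1$.

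For the converse $\text{(ii)}\Rightarrow\text{(i)}$, let $m$ be a $G_2$-invariant mean and let $K$ be any convex compact $G$-space with a $G_1$-fixed point $k_0$. Because $k_0$ is $G_1$-fixed, the orbit map descends to a $G$-equivariant map $\Phi\colon G/G_1\to K$, $gG_1\mapsto g k_0$. I would then define the barycentre $b\in K$ on continuous affine functionals $\ell$ on $K$ by $\ell(b)=m\big(gG_1\mapsto \ell(\Phi(gG_1))\big)$, the integrand being bounded since $K$ is compact. Writing $F_\ell(gG_1)=\ell(\Phi(gG_1))$ and using $\Phi(g_2 gG_1)=g_2\,\Phi(gG_1)$, one checks that $\ell(g_2 b)=m(g_2\inv\cdot F_\ell)=m(F_\ell)=\ell(b)$ for every $g_2\in G_2$; as continuous affine functionals separate the points of $K$, the point $b$ is $G_2$-fixed.

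Finally, $\text{(ii)}\Leftrightarrow\text{(iii)}$ is the classical equivalence between invariant means and almost-invariant vectors (Hulanicki, Reiter, Day), applied to the $G_2$-action on $G/G_1$. Starting from condition~(iii), almost-invariant unit vectors $\xi_\alpha\in\ell^2(G/G_1)$ yield densities $|\xi_\alpha|^2\in\ell^1$ satisfying $\|g_2\cdot|\xi_\alpha|^2-|\xi_\alpha|^2\|_1\le 2\,\|\lambda_{G/G_1}(g_2)\xi_\alpha-\xi_\alpha\|_2\to 0$, so any weak-$*$ cluster point of the $|\xi_\alpha|^2$ in $M$ is a $G_2$-invariant mean. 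Conversely, starting from such a mean, I would approximate it weak-$*$ by finitely supported probability measures $f_\alpha$; invariance gives $g_2\cdot f_\alpha-f_\alpha\to 0$ in the weak topology of $\ell^1$, which a Mazur-type convexity argument (Day's trick) upgrades to norm convergence after passing to convex combinations, and then $\sqrt{f_\alpha}\in\ell^2$ are almost invariant because $\|\lambda_{G/G_1}(g_2)\sqrt{f_\alpha}-\sqrt{f_\alpha}\|_2^2\le\|g_2\cdot f_\alpha-f_\alpha\|_1$.

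I expect the only genuine subtlety to be the barycentre step of $\text{(ii)}\Rightarrow\text{(i)}$: since $m$ is merely finitely additive, one must guarantee that the functional $\ell\mapsto m(F_\ell)$ is represented by an honest point of $K$ rather than by some element of the ambient locally convex space. The clean route is to exploit the weak-$*$ density of finitely supported probability measures in $M$, so that $b$ is exhibited as a weak-$*$ limit of genuine convex combinations of the points $\Phi(gG_1)\in K$, and hence lies in the closed convex compact set $K$.
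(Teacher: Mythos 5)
Your proposal is correct and follows essentially the same route as the paper: the paper also tests (i) on the convex compact space of means on $G/G_1$ with the Dirac mean at the trivial coset as $G_1$-fixed point, proves (ii)$\Rightarrow$(i) by pushing the invariant mean forward along the orbit map $G/G_1\to K$ and taking its barycentre, and quotes the Hulanicki--Reiter equivalence for (ii)$\Leftrightarrow$(iii). The only difference is one of detail: you spell out the Reiter/Day argument and justify the barycentre's existence by weak-$*$ approximation with finitely supported measures, whereas the paper simply remarks that means, being states on bounded functions, admit barycentres in $K$ because continuous functions on a compact set are bounded.
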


The reformulation \eqref{pt: relco:rep} suggests to use \Cref{thm:more:gaps} as an input for C*-algebra arguments; this is pursued in \cite{Gerasimova-Monod_ex}.

\begin{proof}[Proof of \Cref{lem:relco}]
The equivalence of \eqref{pt: relco:mean} and \eqref{pt: relco:rep} is the usual Hulanicki--Reiter condition for any action on any set, here the $G_2$-action on $G/G_1$. The implication \eqref{pt: relco}$\Rightarrow$\eqref{pt: relco:mean} follows by considering the convex compact $G$-space of all means on $G/G_1$, noting that it indeed has a $G_1$-fixed point, namely the Dirac mass at the trivial coset.

Finally, for \eqref{pt: relco:mean}$\Rightarrow$\eqref{pt: relco}, consider an arbitrary convex compact $G$-space $K$ which contains some $G_1$-fixed point $x\in K$. The orbital map $g\mapsto g x$ induces a $G$-equivariant map $G/G_1\to K$ and hence by push-forward we obtain a $G_2$-invariant mean on $K$. The barycentre of this mean is a $G_2$-fixed point. (We recall here that the usual notion of barycentre for probability measures applies to means. Indeed, probability measures are states on continuous functions, while means are states on bounded functions; but any continuous function on $K$ is bounded.)
\end{proof}

With the statements of \Cref{thm:many:gaps} and \Cref{thm:more:gaps} in mind, the importance of relative co-amenability is as follows. By exhibiting a large poset of subgroups $G_i<G$ that are pairwise \emph{not} co-amenable to one another relative to $G$, we strengthen the non-amenability of $G$. Every chain of inclusions in such a poset can be thought of as a gradient of increasing non-amenability.

We underline that even when the subgroups are nested $G_1< G_2<G$, this relative non-amenability is stronger than simply stating that $G_1$ is not co-amenable in $G_2$:

\begin{exam}\label{exam:MPP}
Consider a situation with $G_1< G_2<G$ where $G_1$ is co-amenable in $G$ but not in $G_2$. Examples are given in \cite{Monod-Popa} and \cite{Pestov}, for instance $G=(\bigoplus_\ZZ F_2)\rtimes \ZZ$, $G_2=\bigoplus_\ZZ F_2$ and $G_1=\bigoplus_\NN F_2$. Then $G_1$ is still co-amenable to $G_2$ \emph{relative to $G$}.
\end{exam}

We can now complete the proof of both \Cref{thm:many:gaps} and \Cref{thm:more:gaps} from the introduction. Recall that for any set $S$ of prime numbers, $\Gamma_S$ denotes the group of piecewise-$\SL_2(\ZZ[1/S])$ homeomorphisms of the line and that when $S$ is non-empty, $\Gamma_S$ coincides with $H(\ZZ[1/S])$. The more explicit statement is as follows:

\begin{thm}\label{thm:all:gaps}
Let $S,S'$ be any sets of primes. If $p$ is in $S' \smallsetminus S$, then the convex compact $H(\QQ)$-space of measurable maps $\PP^1_\RR \to \Prob(\PP^1_{\QQ_p})$ admits a $\Gamma_{S}$-fixed point but no $\Gamma_{S'}$-fixed point.

In particular, $\Gamma_{S}$ is not co-amenable to $\Gamma_{S'}$ relative to $H(\QQ)$.

A fortiori, if $S \subsetneqq S'$ then the subgroup $\Gamma_S<\Gamma_{S'}$ is not co-amenable in $\Gamma_{S'}$. 
\end{thm}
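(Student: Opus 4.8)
The plan is to exhibit the single convex compact $H(\QQ)$-space $K$ of measurable maps $\PP^1_\RR\to\Prob(\PP^1_{\QQ_p})$, equipped with the piecewise action of \Cref{sec:PW:action}, and to check that it carries a $\Gamma_S$-fixed point while carrying no $\Gamma_{S'}$-fixed point. Both $\Gamma_S$ and $\Gamma_{S'}$ are subgroups of $H(\QQ)$ because their coefficient rings $\ZZ[1/S]$ and $\ZZ[1/S']$ lie in $\QQ$, so $K$ is a common test space for them. I would establish the two halves in turn and then read off the three conclusions.

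For the $\Gamma_S$-fixed point, the decisive arithmetic observation is that $p\notin S$ forces $\ZZ[1/S]\se\ZZ_p$: every element of $\ZZ[1/S]$ has denominator supported on the primes of $S$ and is therefore $p$-integral. Hence, for any $h\in\Gamma_S$, each local projective part $h|_x\in\PSL_2(\ZZ[1/S])$ maps, under $\ZZ[1/S]\hookrightarrow\QQ_p$, into the \emph{compact} group $\PSL_2(\ZZ_p)$. Let $\mu_0\in\Prob(\PP^1_{\QQ_p})$ be the $\PSL_2(\ZZ_p)$-invariant measure provided by \Cref{exam:P1:Qp}, and let $f\equiv\mu_0$ be the corresponding constant map. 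Then for every $h\in\Gamma_S$ and almost every $x$ we have $f(hx)=\mu_0=h|_x(\mu_0)=h|_x(f(x))$, the middle equality being precisely the $\PSL_2(\ZZ_p)$-invariance of $\mu_0$; by the fixed-point criterion of \Cref{sec:PW:action}, $f$ is a $\Gamma_S$-fixed point in $K$.

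For the absence of a $\Gamma_{S'}$-fixed point I would invoke \Cref{thm:basic:non-amen} directly. As $p\in S'$, we have $\ZZ[1/p]\se\ZZ[1/S']$ and hence $\Gamma_{\{p\}}=H(\ZZ[1/p])\le\Gamma_{S'}$, and the $\Gamma_{\{p\}}$-action on $K$ is exactly the one treated in \Cref{thm:basic:non-amen}. That theorem shows $\Gamma_{\{p\}}$ has no fixed point in $K$; since a $\Gamma_{S'}$-fixed point would restrict to a $\Gamma_{\{p\}}$-fixed point, $\Gamma_{S'}$ has none either.

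Finally I would harvest the statements. Having built a convex compact $H(\QQ)$-space with a $\Gamma_S$-fixed point and no $\Gamma_{S'}$-fixed point, the definition of relative co-amenability gives immediately that $\Gamma_S$ is not co-amenable to $\Gamma_{S'}$ relative to $H(\QQ)$. When $S\subsetneqq S'$, restricting the $H(\QQ)$-action on $K$ to the subgroup $\Gamma_{S'}$ produces a convex compact $\Gamma_{S'}$-space that still has the $\Gamma_S$-fixed point $f\equiv\mu_0$ (since $\Gamma_S\le\Gamma_{S'}$) but no $\Gamma_{S'}$-fixed point; by \Cref{defi:co-amen} this is exactly the failure of co-amenability of $\Gamma_S$ in $\Gamma_{S'}$. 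I expect the only load-bearing step to be the arithmetic reduction $p\notin S\Rightarrow\ZZ[1/S]\se\ZZ_p$: it is what collapses the local action into a compact group and thereby promotes the generic invariant measure $\mu_0$ to a globally equivariant map, everything else being bookkeeping about restricting the one space $K$ to the relevant subgroups.
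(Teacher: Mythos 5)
Your proposal is correct and follows essentially the same route as the paper: the constant map at the $\SL_2(\ZZ_p)$-invariant measure (using $p\notin S\Rightarrow\ZZ[1/S]\se\ZZ_p$) gives the $\Gamma_S$-fixed point, and \Cref{thm:basic:non-amen} rules out a $\Gamma_{S'}$-fixed point; passing through the subgroup $\Gamma_{\{p\}}$ is exactly the reduction already made inside the proof of that theorem. The derivation of the two co-amenability conclusions is also the intended one.
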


\begin{proof}
We proved in \Cref{thm:basic:non-amen} that $\Gamma_{S'}$ has no fixed point. By contrast, $\Gamma_{S}$ fixes the constant map whose value is the $\SL_2(\ZZ_p)$-invariant measure on $\PP^1_{\QQ_p}$ (see \Cref{exam:P1:Qp}) since $\ZZ[1/S]$ is contained in $\ZZ_p$.
\end{proof}

The fact that \Cref{cor:rel:bnd} follows from \Cref{thm:all:gaps} is established in Corollary~4.2 of \cite{Monod_furstenberg}.

\medskip

As discussed in the introduction, the above results also illustrate that the open problem of the (non-)amenability of Thompson's group $H_{\QQ}(\ZZ)$ is completely decoupled from the non-amenability of the various groups $H(A)$ with $A$ non-discrete. In the above poset, $H_{\QQ}(\ZZ)$ lies in $\Gamma_{\varnothing}$ and is therefore not co-amenable in any of the other $\Gamma_S$, nor even co-amenable to $\Gamma_S$ relative to $H(\QQ)$.

\subsection{Other rings}
The goal of this note until this point was to establish variations and stronger forms of the non-amenability theorem from \cite{Monod_PNAS}, but without the theory of measured equivalence relations. Instead, we argued from first principles using simple cut-and-paste and leveraging the very basic fact that $\SL_2(\ZZ)$ has finite covolume in $\SL_2(\RR)$.

This was enough to be able to deal with the rings $A=\ZZ[1/S]$ and applies also to the ring of integers in any other real number fields, such as $A=\ZZ[\sqrt2]$, or their $S$-arithmetic generalisations.

\medskip
In order to justify \Cref{thm:co} from the introduction even when $A$ is not some $S$-arithmetic ring, we prove the following, which does rely on the Carrière--Ghys theorem.

\begin{thm}\label{thm:coamen:CG}
Let $\Lambda<H(\RR)$ be any subgroup containing $\Gamma_\varnothing$. Suppose that $\Lambda$ has the same same orbits in $\PP^1_\RR$ as a countable dense subgroup $\Delta<\SL_2(\RR)$ (after discarding a null-set in $\PP^1_\RR$).

Then $\Gamma_\varnothing$ is not co-amenable in $\Lambda$.

The same statement holds with $\Gamma_\varnothing$ replaced throughout by Thompson's group.
\end{thm}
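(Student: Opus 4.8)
The plan is to show that $\Gamma_\varnothing$ (Thompson's group $F$) is not co-amenable in $\Lambda$ by exhibiting a convex compact $\Lambda$-space with a $\Gamma_\varnothing$-fixed point but no $\Lambda$-fixed point, or equivalently, by using the measured-equivalence-relation framework of \cite{Monod_PNAS} that the theorem's phrasing (``relies on the Carrière--Ghys theorem'') signals. I would first set up the orbit equivalence relation $\mathcal{R}$ on $\PP^1_\RR$ generated by $\Delta$, which by hypothesis coincides (off a null-set) with the orbit relation of $\Lambda$, hence also contains the $\Gamma_\varnothing$-orbit relation. The crucial input is that $\Delta < \SL_2(\RR)$ is \emph{countable and dense}: this is precisely the setting in which the Carrière--Ghys theorem applies, forcing the relation $\mathcal{R}$ to be \emph{ergodic} with respect to the Lebesgue measure class on $\PP^1_\RR$, and moreover \emph{non-amenable} as a measured relation (a dense countable subgroup of $\SL_2(\RR)$ cannot act amenably, since an amenable action would descend to amenability of the ambient relation, contradicting property (T)-type rigidity or the explicit Carrière--Ghys conclusion on non-amenability of such foliated relations).

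Next I would translate this into co-amenability. By \Cref{lem:relco}, $\Gamma_\varnothing$ is co-amenable in $\Lambda$ iff the $\Lambda$-action on $\Lambda/\Gamma_\varnothing$ carries an invariant mean. The bridge to the relation $\mathcal{R}$ is that the coset space $\Lambda/\Gamma_\varnothing$ fibres over the orbit space in a way that records the local projective data: since both groups act with the same orbits, the quotient $\Lambda/\Gamma_\varnothing$ is governed by the ``slope'' or germ cocycle valued in $\PSL_2(\RR)$, and an invariant mean on $\Lambda/\Gamma_\varnothing$ would produce an invariant mean on the associated $\mathcal{R}$-cocycle system. Concretely, I would build from a hypothetical $\Lambda$-invariant mean a measurable family of means along the $\mathcal{R}$-classes that is equivariant for the $\SL_2(\RR)$-valued cocycle, i.e. an \emph{amenable structure} on $\mathcal{R}$ in the sense of Zimmer. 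This is where the equivalence-relation language does the real work: co-amenability of $\Gamma_\varnothing$ in $\Lambda$ is equivalent to amenability of the relation $\mathcal{R}$ (equipped with the natural $\SL_2$-cocycle), because $\Gamma_\varnothing$ realises the full relation while carrying no extra projective freedom beyond what $\SL_2(\ZZ)$-rationality permits.

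The last step is to derive the contradiction: the Carrière--Ghys theorem asserts that the relation generated by a countable dense subgroup of $\SL_2(\RR)$ acting on $\PP^1_\RR$ is \textbf{not} amenable, so no such invariant family of means can exist, and therefore $\Gamma_\varnothing$ is not co-amenable in $\Lambda$. For the parenthetical ``same statement with Thompson's group'', I would note that $\Gamma_\varnothing = H_\RR(\ZZ)$ and Thompson's group $F = H_\QQ(\ZZ)$ differ only by the breakpoint set, and both generate the same orbit relation as $\SL_2(\ZZ)$ on $\PP^1_\RR$ up to a null-set (rational points form a null-set), so the entire argument goes through verbatim with $F$ in place of $\Gamma_\varnothing$; the key point is that $F$ too carries no projective freedom beyond $\SL_2(\ZZ)$, so co-amenability of $F$ in $\Lambda$ would again force amenability of $\mathcal{R}$.

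\medskip

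\emph{The main obstacle} I anticipate is the precise dictionary between co-amenability of the pair $(\Lambda, \Gamma_\varnothing)$ and amenability of the measured relation $\mathcal{R}$ with its projective cocycle. The subtle point is that $\Lambda$ and $\Gamma_\varnothing$ act with the \emph{same orbits} but are genuinely different groups, so one must verify that the relative mean on $\Lambda/\Gamma_\varnothing$ genuinely encodes an $\mathcal{R}$-invariant transverse measure or amenable weight, rather than collapsing trivially. Making this correspondence rigorous — ensuring the cocycle is the one to which Carrière--Ghys applies, and that densenesses of $\Delta$ is exactly what is needed to exclude amenability — is the technical heart, and is presumably why the author flags this as the one result still requiring the Carrière--Ghys machinery rather than the elementary fixed-point method used elsewhere.
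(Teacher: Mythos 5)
Your global strategy --- non-amenability of the $\Delta$-orbit relation $\mathcal{R}$ on $\PP^1_\RR$ via Carri\`ere--Ghys, contradicted by transferring amenability through the pair $\Gamma_\varnothing<\Lambda$ --- is the right one and is the paper's. But there is a genuine gap at the transfer step, and the claim you use to bridge it is false. You assert that co-amenability of $\Gamma_\varnothing$ in $\Lambda$ is \emph{equivalent} to amenability of $\mathcal{R}$ ``because $\Gamma_\varnothing$ realises the full relation''. It does not: the orbit relation of $\Gamma_\varnothing$ on $\PP^1_\RR$ is the orbit relation of $\SL_2(\ZZ)$ (each element acts locally by an integral matrix, and \Cref{prop:cut-paste} gives the reverse inclusion), which is a \emph{proper} sub-relation of $\mathcal{R}$ and, crucially, is \emph{amenable}, because $\SL_2(\ZZ)$ is discrete in $\SL_2(\RR)$ and $\PP^1_\RR$ is $\SL_2(\RR)$ modulo the amenable upper-triangular subgroup \cite[4.3.2]{Zimmer84}. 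This amenability of the sub-relation is the missing half of your argument: an invariant mean on $\Lambda/\Gamma_\varnothing$ only lets you average over the collection of $\Gamma_\varnothing$-orbits sitting inside a $\Lambda$-orbit; to assemble an invariant system of means for a cocycle over all of $\mathcal{R}$ you also need such a system along each $\Gamma_\varnothing$-orbit, and that is precisely amenability of the $\Gamma_\varnothing$-relation. Note also that if $\Gamma_\varnothing$ really did realise the full relation, its relation would be non-amenable as well and no contradiction could be extracted from your setup.

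The paper packages the transfer as \Cref{prop:rel:rel}: if $\Gamma<\Lambda$ is co-amenable and the $\Gamma$-orbit relation is amenable, then so is the $\Lambda$-orbit relation. Its proof does not manipulate means on $\Lambda/\Gamma_\varnothing$ at all; it uses Hjorth's fixed-point characterisation of relation amenability. A relation cocycle $\alpha\colon\Lambda\times X\to\Homeo(L)$ yields a convex compact $\Lambda$-space of measurable maps $X\to\Prob(L)$ exactly as in \Cref{sec:PW:action}; amenability of the $\Gamma$-relation gives a $\Gamma$-fixed point, co-amenability in the sense of \Cref{defi:co-amen} upgrades it to a $\Lambda$-fixed point, and Hjorth's criterion converts that back into amenability of the $\Lambda$-relation. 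Applied with $X=\PP^1_\RR$ this contradicts Carri\`ere--Ghys. Two smaller points: ergodicity and ``property (T)-type rigidity'' play no role (only non-amenability of the $\Delta$-relation is used, transported from $\SL_2(\RR)$ to $\PP^1_\RR$ by quotienting by the amenable upper-triangular group); and $\Gamma_\varnothing$ is not Thompson's group but the full piecewise-$\SL_2(\ZZ)$ group with unrestricted breakpoints --- for $F=H_\QQ(\ZZ)$ one only needs that its relation is \emph{contained} in the $\SL_2(\ZZ)$-relation, hence amenable, so the same argument applies.
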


This can be applied for instance when $\Lambda$ is the group studied by Lodha--Moore \cite{Lodha-Moore}.

\medskip

To prove \Cref{thm:coamen:CG}, we shall need the following general fact about relative amenability in relation to relations; this is a relative of the fact related in Proposition~9 of \cite{Monod_wreath}.

\begin{prop}\label{prop:rel:rel}
Let $\Lambda$ be a countable group with a non-singular action on a standard probability space $X$. Let $\Gamma < \Lambda$ be a co-amenable subgroup of $\Lambda$.

If the orbit equivalence relation produced by $\Gamma$ on $X$ is amenable, then so is the relation produced by $\Lambda$.
\end{prop}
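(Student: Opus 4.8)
The plan is to use the fixed-point formulation of amenability for measured equivalence relations, due to Zimmer, and to reduce the passage from $\mathcal{R}_\Gamma$ to $\mathcal{R}_\Lambda$ to a single application of co-amenability in the precise form of \Cref{defi:co-amen}. Recall that a non-singular countable equivalence relation $\mathcal{R}$ on $X$ is amenable exactly when every measurable field $(K_x)_{x\in X}$ of convex weak-* compact sets, sitting in the duals of a measurable field of separable Banach spaces and equipped with a measurable affine $\mathcal{R}$-action by maps $\rho_{x,y}\colon K_y\to K_x$ (for $(x,y)\in\mathcal{R}$, satisfying the cocycle identity $\rho_{x,y}\rho_{y,z}=\rho_{x,z}$), admits a measurable invariant section $x\mapsto s(x)\in K_x$. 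The whole point is that such an invariant section is nothing but a fixed point in a suitable convex compact space, so that amenability of a relation becomes a fixed-point statement to which the co-amenability hypothesis directly applies.

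Concretely, first I would fix an arbitrary such field $(K_x)$ for the larger relation $\mathcal{R}_\Lambda$ and form the space $\mathcal{K}$ of all measurable sections $x\mapsto s(x)\in K_x$, identifying sections that agree off a null-set. As in the construction of \Cref{exam:f}, realising the fibres in duals of separable Banach spaces lets one view $\mathcal{K}$ inside the dual of an $L^1$-space of sections, so that Alaoglu's theorem makes $\mathcal{K}$ convex and compact for the resulting weak-* topology. The group $\Lambda$ acts on $\mathcal{K}$ by
\[
(\lambda\cdot s)(x)=\rho_{x,\lambda\inv x}\big(s(\lambda\inv x)\big),
\]
and since the $\Lambda$-action on $X$ is non-singular, precomposition with $\lambda\inv$ is weak-* continuous (its Radon--Nikodym derivative makes the pullback bounded on the relevant $L^1$-space), so $\Lambda$ acts by affine homeomorphisms. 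By construction, a section $s$ is fixed by a subgroup $G\le\Lambda$ if and only if it is invariant under the subrelation $\mathcal{R}_G$ generated by $G$; in particular the $\Gamma$-fixed points of $\mathcal{K}$ are exactly the $\mathcal{R}_\Gamma$-invariant sections and the $\Lambda$-fixed points are the $\mathcal{R}_\Lambda$-invariant ones.

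Now the two hypotheses combine at once. Since $\mathcal{R}_\Gamma\subseteq\mathcal{R}_\Lambda$, restricting the given field along this inclusion turns it into a measurable field with $\mathcal{R}_\Gamma$-action; amenability of $\mathcal{R}_\Gamma$ then furnishes an $\mathcal{R}_\Gamma$-invariant section, that is, a $\Gamma$-fixed point in $\mathcal{K}$. Thus $\mathcal{K}$ is a convex compact $\Lambda$-space possessing a $\Gamma$-fixed point, and co-amenability of $\Gamma$ in $\Lambda$ (\Cref{defi:co-amen}) yields a $\Lambda$-fixed point of $\mathcal{K}$, i.e.\ an $\mathcal{R}_\Lambda$-invariant section. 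As the field was arbitrary, $\mathcal{R}_\Lambda$ is amenable.

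The hard part will not be the logical skeleton above but the functional-analytic bookkeeping legitimising the use of the abstract \Cref{defi:co-amen}: one must check that the fibres can be realised measurably in duals of separable Banach spaces, that $\mathcal{K}$ is genuinely compact in the integrated weak-* topology rather than merely bounded, and that the formula for $\lambda\cdot s$ defines a continuous affine action respecting null-sets (this last point is precisely where non-singularity is needed). These facts are standard in Zimmer's theory of amenable actions and relations, and the constant-fibre case is exactly the mechanism already set up in \Cref{exam:f}; the only novelty here is that the fibre $K_x$ is allowed to vary measurably with $x$. A secondary point to handle with care is the almost-everywhere consistency of the dictionary between $G$-fixed sections and $\mathcal{R}_G$-invariant sections, which relies on the countability of $\Lambda$ to quantify over group elements rather than over all pairs of the relation.
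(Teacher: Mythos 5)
Your argument is correct and follows the same skeleton as the paper's: encode amenability of the relation as a fixed-point property, assemble a convex compact $\Lambda$-space of sections on which $\Gamma$-fixed points correspond to invariance under the subrelation, and let \Cref{defi:co-amen} do the rest. The difference lies in the key external input. You invoke Zimmer's original fixed-point definition of an amenable relation, with a genuinely varying measurable field $(K_x)$ of convex weak-* compact sets; this is why your ``hard part'' involves measurable fields, measurable realisations in duals of separable Banach spaces, and compactness of the integrated section space. The paper instead invokes Hjorth's characterisation: amenability of the relation is tested only against relation cocycles $\alpha\colon\Lambda\times X\to\Homeo(L)$ for a compact metrisable $L$, with the \emph{constant} fibre $\Prob(L)$. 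That choice makes the space of sections literally an instance of the space of maps from \Cref{exam:f} and the action an instance of the twisted action of \Cref{sec:PW:action}, so no new functional-analytic bookkeeping is needed beyond what the paper has already set up. Your route is more self-contained relative to Zimmer's theory (you never need Hjorth's equivalence), at the cost of carrying the measurable-field formalism, including the measurable-selection argument needed to see that the section space is non-empty; the paper's route outsources exactly that burden to Hjorth's theorem. Both are complete proofs; just be sure, if you keep your version, to state precisely which form of Zimmer's definition you use and to include the non-emptiness of $\mathcal{K}$ among the points to verify.
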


\begin{proof}[Proof of \Cref{prop:rel:rel}]
Let $L\neq\varnothing$ be a compact metrisable space and let $\alpha\colon \Lambda\times X \to \Homeo(L)$ be a \textbf{relation cocycle} to the group of homeomorphisms of $L$. Being a \emph{cocycle} means that  $\alpha(\lambda, \cdot)$ is a measurable map for each $\lambda\in \Lambda$ and that for $\lambda,\eta\in \Lambda$ the chain rule $\alpha(\lambda \eta, x)= \alpha(\lambda, \eta x) \alpha(\eta, x)$ holds for a.e.~$x\in X$. Being a \emph{relation} cocycle means that $\alpha(\lambda, x)$ depends only on the pair $(\lambda x, x)$ of points.

Hjorth proved (Theorem~1.1 in \cite{Hjorth_furstenberg}) that the relation produced by $\Lambda$ on $X$ is amenable if and only if for any such $L$ and $\alpha$, there exists a measurable map $f \colon X \to \Prob(L)$ such that for every $\lambda\in\Lambda$ and a.e.~$x\in X$
\[
f (\lambda x) = \alpha(\lambda, x)  f(x).
\]
This is equivalent to $f$ being $\Lambda$-fixed for the action
\[
(\lambda.f)(x) = \alpha(\lambda, \lambda\inv x) ( f (\lambda\inv x)).
\]
Exactly as in Section~\ref{sec:PW:action}, we have now a convex compact $\Lambda$-space $K$ of maps. Hjorth's criterion tells us that $\Gamma$ has a fixed point in $K$, and therefore the co-amenability of $\Gamma$ in $\Lambda$ yields the conclusion.
\end{proof}

\begin{proof}[Proof of \Cref{thm:coamen:CG}]
The Carrière--Ghys theorem states that the orbit equivalence relation produced by $\Delta$ on $\SL_2(\RR)$ is not an amenable equivalence relation. As recalled in \cite{Monod_PNAS}, this implies that the relation produced on $\PP^1_\RR$ is not amenable either. (This follows from the fact that $\PP^1_\RR$ can be realised as the quotient of $\SL_2(\RR)$ by an \emph{amenable} subgroup, namely the upper triangular matrices.) Thus the relation produced by $\Lambda$ is not amenable.

On the other hand, the relation produced by $\Gamma_\varnothing$ is amenable since it coincides with the relation produced by $\SL_2(\ZZ)$, which is amenable because $\SL_2(\ZZ)$ is discrete in $\SL_2(\RR)$, see \cite[4.3.2]{Zimmer84}.

At this point, the statement follows from \Cref{prop:rel:rel}.
\end{proof}

Note that \Cref{thm:coamen:CG} implies in particular \Cref{thm:co} for any countable ring $\ZZ \lneqq A<\RR$.

There remains one issue, namely that the ring is not supposed countable in \Cref{thm:co}. The remedy should be to consider $\Gamma_\varnothing < H(A') < H(A)$ for some countable ring $\ZZ \lneqq A' <A$. The problem with this approach is that given nested subgroups $\Gamma < \Lambda_1 < G$, there is generally no implication between the co-amenability of $\Gamma$ in $\Lambda_1$ or in $G$. Indeed, examples to this end can be found in \cite{Pestov} and \cite{Monod-Popa}, as already mentioned in \Cref{exam:MPP}.

We can overcome this difficulty if we observe that (co-)amenability, being an analytic property, is countably determined:

\begin{prop}\label{prop:countable}
Let $\Gamma< G$ be a countable co-amenable subgroup of a group $G$. 

For every countable $\Lambda_1 < G$ containing $\Gamma$, there is  a countable subgroup $\Lambda<G$ containing $\Lambda_1$ such that $\Gamma$ is co-amenable in $\Lambda$. 


If we are moreover given any cofinal family $\mathscr H$ of countable subgroups $H<G$ which is closed under countable increasing unions, then we can choose $\Lambda \in \mathscr H$.
\end{prop}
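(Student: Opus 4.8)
The plan is to translate co-amenability into the Hulanicki--Reiter almost-invariance condition and then run a closure argument that exploits the fact that this condition is a countable conjunction of finitely supported constraints. Applying \Cref{lem:relco} with $G_1=\Gamma$ and $G_2=G$ (so that co-amenability of $\Gamma$ in $G$ in the sense of \Cref{defi:co-amen} is exactly relative co-amenability to $G$ itself), the hypothesis is equivalent to the existence of a $G$-invariant mean on $G/\Gamma$, and hence to Reiter's condition: for every finite $F\subseteq G$ and $\epsilon>0$ there is a finitely supported probability vector $\xi\in\ell^1(G/\Gamma)$ with $\|g.\xi-\xi\|_1<\epsilon$ for all $g\in F$. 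The same equivalence, applied to the pair $\Gamma<\Lambda$, reduces the conclusion to producing a countable $\Lambda\supseteq\Lambda_1$ for which Reiter's condition holds for the $\Lambda$-action on $\Lambda/\Gamma$. The one compatibility I would record first is that for any intermediate $\Gamma\subseteq\Lambda\subseteq G$ the natural map $\Lambda/\Gamma\to G/\Gamma$ is injective and $\Lambda$-equivariant; consequently a finitely supported vector on $G/\Gamma$ whose atoms are cosets meeting $\Lambda$ may be regarded as a vector on $\Lambda/\Gamma$ with all distances $\|g.\xi-\xi\|_1$, $g\in\Lambda$, unchanged.

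Then I would build $\Lambda=\bigcup_n H_n$ as an increasing union inside $\mathscr H$. Starting from some $H_1\in\mathscr H$ containing $\Lambda_1$, given $H_n\in\mathscr H$ the set of pairs $(F,k)$ with $F\subseteq H_n$ finite and $k\in\NN$ is countable; for each such pair I choose via Reiter's condition in $G$ a finitely supported $\xi_{F,k}\in\ell^1(G/\Gamma)$ with $\|g.\xi_{F,k}-\xi_{F,k}\|_1<1/k$ for all $g\in F$, and I pick one representative in $G$ for each coset in its finite support. Collecting these countably many representatives into a set $E_n$, I let $H_{n+1}\in\mathscr H$ be any member of the cofinal family containing the countable subgroup $\langle H_n\cup E_n\rangle$. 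The resulting chain $H_1\subseteq H_2\subseteq\cdots$ lies in $\mathscr H$, so $\Lambda=\bigcup_n H_n$ is countable, contains $\Lambda_1\supseteq\Gamma$, and belongs to $\mathscr H$ by closure under countable increasing unions.

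To verify Reiter's condition for $\Lambda$ on $\Lambda/\Gamma$, take a finite $F\subseteq\Lambda$ and $\epsilon>0$, and choose $n$ with $F\subseteq H_n$ and $k>1/\epsilon$. By construction the support of $\xi_{F,k}$ is represented inside $E_n\subseteq H_{n+1}\subseteq\Lambda$, so by the compatibility above $\xi_{F,k}$ is a probability vector on $\Lambda/\Gamma$ satisfying $\|g.\xi_{F,k}-\xi_{F,k}\|_1<1/k<\epsilon$ for all $g\in F$. Thus $\Lambda/\Gamma$ carries a $\Lambda$-invariant mean and \Cref{lem:relco} yields that $\Gamma$ is co-amenable in $\Lambda$. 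The first assertion is then the special case in which $\mathscr H$ is taken to be the family of all countable subgroups of $G$, which is cofinal and closed under countable increasing unions.

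The main obstacle, and the conceptual heart of the matter, is the compatibility step: one must ensure that passing from $G/\Gamma$ to $\Lambda/\Gamma$ neither merges distinct atoms nor distorts the translation action, which is precisely the injectivity and equivariance of $\Lambda/\Gamma\hookrightarrow G/\Gamma$ and is what makes co-amenability ``countably determined''. Once this is in hand the interleaving with $\mathscr H$ is routine, its only genuine input being the hypothesis of closure under countable increasing unions, which is exactly what is needed to keep the limit $\Lambda$ inside $\mathscr H$.
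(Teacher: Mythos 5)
Your proof is correct and follows essentially the same route as the paper's: both arguments iteratively enlarge a countable subgroup by adjoining the supports of almost-invariant $\ell^1$ vectors on $G/\Gamma$ supplied by the co-amenability of $\Gamma$ in $G$, interleave with the cofinal family $\mathscr H$, and take the increasing union. The only cosmetic difference is that you conclude by verifying Reiter's condition for $\Lambda$ on $\Lambda/\Gamma$ directly, whereas the paper takes a weak-* accumulation point of its vectors in the space of means; both are standard and equivalent ways to finish.
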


\noindent
(\emph{Cofinal} means that every countable subgroup of $G$ is contained in some $H\in\mathscr H$. In our case, $\mathscr H$ will consist of the groups of the form $H(A)$.)

\smallskip
We note that \Cref{prop:countable} has a trivial converse:

Let $\mathscr H$ be a family as above and let $\Gamma<G$ be a countable subgroup. Suppose that for every countable $\Lambda_1 < G$ containing $\Gamma$ there is $\Lambda \in \mathscr H$ containing $\Lambda_1$ such that $\Gamma$ is co-amenable in $\Lambda$. Then $\Gamma$ is co-amenable in $G$.

This directly follows from the definition of co-amenability (\Cref{defi:co-amen}) by a compactness argument.

\begin{proof}[Proof of \Cref{prop:countable}]
We inductively construct an increasing sequence of countable subgroups $\Lambda_n < G$, each enumerated as $\{ \lambda_{n,j} : j \in \NN\}$, and a sequence of functions $v_n \in \ell^1(G/\Gamma)$ such that
\[
\big\| \lambda_{i,j} v_n - v_n \big\| <  \frac1n  \| v_n \| \kern5mm \forall \, i,j \leq n
\]
holds for the quasi-regular (translation) $G$-representation on $\ell^1(G/\Gamma)$. Supposing $\Lambda_n$ already given, the existence of $v_n$ follows from the co-amenability of $\Gamma$ in $G$ (see e.g. \cite[p.~44]{Eymard72}). We then define $\Lambda_{n+1}$ to be the subgroup generated by the union of $\Lambda_n$ and the pre-image in $G$ of the support of $v_n$, noting that the latter is a countable set. If some $\mathscr H$ is given, we replace $\Lambda_{n+1}$ by an element of $\mathscr H$ containing it.

Define now $\Lambda$ to be the union of all $\Lambda_n$. Since the support of every $v_n$ is contained in $\Lambda/\Gamma \se G/\Gamma$, any accumulation point of the sequence $v_n$ in the space of means on  $\ell^\infty(G/\Gamma)$ defines in fact a mean on $\ell^\infty(\Lambda/\Gamma)$. This mean is $\Lambda$-invariant, thus verifying another equivalent characterisation \cite{Eymard72} of the co-amenability of $\Gamma$ in $\Lambda$.
\end{proof}

\begin{proof}[Proof of \Cref{thm:co}]
Suppose for a contradiction that $\Gamma_\varnothing$ is co-amenable in $G=H(A)$ for a ring $\ZZ \lneqq A <\RR$. Let $\mathscr H$ be the family of all $H(A')$, where $A'$ ranges over all countable rings with $\ZZ \lneqq A' <A$. Let $\Lambda_1=H(A')$ be one such group and let $\Lambda=H(A'')$ be the countable group given by \Cref{prop:countable}. Thus $A''$ is a countable ring in $A$ and $\SL_2(A'')$ is dense in $\SL_2(\RR)$ because $A'< A''$. Therefore, the co-amenability of $\Gamma_\varnothing$ in $\Lambda$ contradicts \Cref{thm:coamen:CG}.
\end{proof}

\section{Additional comments}
\subsection{Breaking up smoothly and rationally}
The breakpoint convention chosen in \cite{Monod_PNAS} to define $H(A)$ has an aesthetic drawback in the case of $A=\ZZ$. Namely, the fixed points of hyperbolic elements are surds (solutions of quadratic equations) while the fixed points of parabolic elements are rational. In particular, the analogue of \Cref{lem:break} does not hold in that case.

Thus the Thompson group $H_\QQ(\ZZ)$ and the group $H(\ZZ)$ are two different subgroups of $\Gamma_\varnothing$ (though $H(\ZZ)$ contains isomorphic copies of Thompson's group, see \cite{Stankov_H}).

\medskip
In addition to having rational breakpoints, the Thompson group $H_\QQ(\ZZ)$ exhibits another pleasant quality: since $\ZZ$ has no non-trivial positive unit, these piecewise-projective elements are actually $C^1$-smooth. This is the maximal smoothness for breakpoints because projective transformations are entirely determined by their $C^2$-jet at any point. We could therefore also ask to strengthen our non-amenability results for groups of $C^1$-diffe\-o\-morphisms.

The method of proof employed above can indeed be adapted to this setting; here is an example.

\begin{thm}\label{thm:Q}
The Thompson group $H_\QQ(\ZZ)$ is not co-amenable in the group $H_\QQ(\QQ)$ of piecewise-$\SL_2(\QQ)$ homeomorphisms of the line with rational breakpoints.

It is also not co-amenable in the smaller subgroup $H_\QQ^{C^1}(\QQ)$ of $C^1$-diffeomorphisms.
\end{thm}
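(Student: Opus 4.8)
The plan is to prove both assertions at once using a single convex compact space. Fix a prime $p$ and let $K$ be the $H_\QQ(\QQ)$-space of measurable maps $\PP^1_\RR \to \Prob(\PP^1_{\QQ_p})$, equipped with the piecewise action of \Cref{sec:PW:action}; restricting this action makes $K$ a space for $H_\QQ^{C^1}(\QQ)$ and for $H_\QQ(\ZZ)$ as well. I would exhibit an $H_\QQ(\ZZ)$-fixed point in $K$ while showing that neither larger group has one, so that \Cref{defi:co-amen} yields both non-co-amenability conclusions. The fixed point for Thompson's group is the constant map with value the $\SL_2(\ZZ_p)$-invariant measure $\mu_0$ on $\PP^1_{\QQ_p}$ (see \Cref{exam:P1:Qp}): the fixing equation $(h.f)(x)=h|_{h\inv x}(\mu_0)=\mu_0$ holds because every local part $h|_\bullet$ of an element of $H_\QQ(\ZZ)$ lies in $\SL_2(\ZZ)\subset\SL_2(\ZZ_p)$, which fixes $\mu_0$.

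To rule out fixed points for $H_\QQ(\QQ)$ and $H_\QQ^{C^1}(\QQ)$, I would apply \Cref{prop:Frank:fixed} with $A=\QQ$. Once one knows that every $g\in\SL_2(\QQ)$ arises as a local part $h|_x$ of some element $h$ of the group under consideration (for a.e.\ $x$), a fixed point of $K$ is the same as an $\SL_2(\QQ)$-fixed point for the product action of \Cref{thm:basic:non-amen}. But $\SL_2(\ZZ[1/p])\subset\SL_2(\QQ)$ has no fixed point in $K$ by \Cref{thm:basic:non-amen}, so a fortiori neither does $\SL_2(\QQ)$. Thus everything reduces to a cut-and-paste statement tailored to rational (and $C^1$) breakpoints.

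The crux — and the one place where \Cref{prop:cut-paste} cannot be quoted verbatim, since its breakpoints $g\inv\xi_\pm$ are surds and its junctions are not $C^1$ — is this adapted cut-and-paste. Given $g\in\SL_2(\QQ)$ and $x\in\RR$ not the pole of $g$, I would pick rational $r_1<x<r_2$ close enough that $g$ has no pole on $[r_1,r_2]$, set $h=g$ there, and extend by \emph{affine} maps on the two rays $(-\infty,r_1]$ and $[r_2,+\infty)$. An affine piece $y\mapsto\lambda y+\beta$ lies in $\SL_2(\QQ)$ as soon as $\lambda$ is a positive square in $\QQ$ and $\beta\in\QQ$; choosing it to take the rational value $g(r_i)$ at the rational endpoint $r_i$ produces a homeomorphism of $\RR$ fixing $\infty$, with the two rational breakpoints $r_1,r_2$ and local part $g$ at $x$. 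This establishes the realisation hypothesis of \Cref{prop:Frank:fixed} for $H_\QQ(\QQ)$ and settles the first assertion.

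For the $C^1$ refinement I would additionally match first derivatives at $r_1$ and $r_2$. The key observation is that the derivative of a projective map at a rational point is automatically a rational square: for $g=\left(\begin{smallmatrix}a&b\\c&d\end{smallmatrix}\right)$ one has $g'(r)=(cr+d)^{-2}$. Hence the affine slope $\lambda=g'(r_i)$ required for a $C^1$ junction is a positive square in $\QQ$, so the matching affine piece still lies in $\SL_2(\QQ)$; matching both the value and this slope turns $h$ into a genuine $C^1$-diffeomorphism with rational breakpoints, i.e.\ $h\in H_\QQ^{C^1}(\QQ)$. The realisation hypothesis of \Cref{prop:Frank:fixed} then holds for $H_\QQ^{C^1}(\QQ)$ as well, and the argument of the previous paragraph applies unchanged. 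I expect the main obstacle to be exactly this reconciliation of the cut-and-paste with the simultaneous constraints of rational breakpoints and $C^1$-smoothness; the rational-square identity above is precisely what removes it.
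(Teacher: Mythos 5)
Your proof is correct, and its outer architecture is exactly the paper's: the same space $K$ of measurable maps $\PP^1_\RR\to\Prob(\PP^1_{\QQ_p})$, the same constant map with value the $\SL_2(\ZZ_p)$-invariant measure as the $H_\QQ(\ZZ)$-fixed point, and the same reduction via \Cref{prop:Frank:fixed} to the absence of an $\SL_2(\ZZ[1/p])$-fixed point established in \Cref{thm:basic:non-amen}. Where you genuinely diverge is in the cut-and-paste lemma (the paper's \Cref{prop:smooth:cut}). The paper recycles the surgery of \Cref{prop:cut-paste}: it forces the fixed points $\xi_\pm$ of the auxiliary matrix $q_0$ to be rational by arranging the trace to equal $N+1/N$, so that the discriminant $\tau^2-4=(N-1/N)^2$ is a rational square, and then restores $C^1$-smoothness at each breakpoint by inserting a short unipotent piece $u$ whose derivative interpolates between $1$ and $a^2$. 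You instead cut at two arbitrary rationals $r_1<x<r_2$ avoiding the pole and glue affine maps onto the complementary rays; the identity $g'(r)=(cr+d)^{-2}$, a positive rational square at rational $r$, is precisely what makes the value-and-slope-matching affine piece land in $\SL_2(\QQ)$, and the resulting $h$ is indeed an increasing $C^1$-diffeomorphism of $\RR$ with rational breakpoints and local part $g$ at $x$. Your construction is shorter and avoids both the Diophantine step and the unipotent spline, at the price of being specific to $\QQ$ (it freely uses rational translations and rational square slopes, which is unavailable over a general ring $A$ where \Cref{prop:cut-paste} must make do with integer translations); since \Cref{thm:Q} is a statement over $\QQ$, that is no loss. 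One cosmetic difference: your $h$ generally has distinct affine slopes on the two rays, so unlike the paper's construction it is not $C^1$ at $\infty$ as a self-map of $\PP^1_\RR$; this is irrelevant here because $H_\QQ^{C^1}(\QQ)$ is a group of $C^1$-diffeomorphisms of the line.
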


All that is needed is to revisit the cut-and-paste of \Cref{prop:cut-paste} and perform a more cosmetic type of surgery:

\begin{prop}\label{prop:smooth:cut}
For every $g\in \SL_2(\QQ)$ and $x\in \RR$ with $gx \neq \infty$, there is a piecewise-$\SL_2(\QQ)$ homeomorphism $h$ of $\RR$ with breakpoints in $\QQ$ and which coincides with $g$ on a neighbourhood of $x$.

Furthermore, we can choose $h$ to be a $C^1$-diffeomorphism of $\RR$.
\end{prop}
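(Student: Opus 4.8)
The plan is to revisit the cut of \Cref{prop:cut-paste}. There, $g$ was glued to the identity along the two fixed points of a \emph{hyperbolic} matrix; at such a fixed point the multiplier differs from $1$, so the derivative of $h$ jumps and $h$ fails to be $C^1$. To repair this I glue $g$, not to the identity, but to two affine maps chosen so as to match $g$ \emph{to first order} at the breakpoints. Equivalently, the transition matrices at the breakpoints will be \emph{parabolic} rather than hyperbolic, and a parabolic element has derivative $1$ at its fixed point, which is exactly the $C^1$ gluing condition.

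If $c=0$ then $g$ is already affine (equal to $t\mapsto a^2 t+ab$ since $ad=1$), hence a $C^1$ element of $H_\QQ(\QQ)$, and I take $h=g$. Otherwise $g$ has a single pole $-d/c\neq x$, and I choose rational breakpoints $y_1<x<y_2$ close enough to $x$ that $[y_1,y_2]$ misses the pole; then $g$ is an increasing diffeomorphism on $(y_1,y_2)$. I set $h=g$ on $(y_1,y_2)$, and on the outer intervals $(-\infty,y_1)$ and $(y_2,+\infty)$ I let $h$ be the affine maps
\[
A_i(t)=g'(y_i)\,(t-y_i)+g(y_i),\qquad i=1,2,
\]
so that $A_i$ agrees with $g$ in value and in derivative at $y_i$.

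The key point making this work over $\QQ$ is the identity $g'(t)=(ct+d)^{-2}$, valid because $\det g=1$. Thus the slope $g'(y_i)=(cy_i+d)^{-2}$ is the \emph{square} of the rational number $s_i:=1/|cy_i+d|$, while $g(y_i)\in\QQ$; hence each $A_i$ is an affine map of positive rational-square slope and rational intercept, admitting the representative $\left(\begin{smallmatrix} s_i & \beta_i/s_i\\ 0 & 1/s_i\end{smallmatrix}\right)\in\SL_2(\QQ)$ with $\beta_i=g(y_i)-g'(y_i)y_i$. Therefore $h$ is piecewise-$\SL_2(\QQ)$ with the two rational breakpoints $y_1,y_2$ and coincides with $g$ on the neighbourhood $(y_1,y_2)$ of $x$. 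Since value and derivative match at each $y_i$, the map $h$ is $C^1$; concretely $A_i\inv g$ fixes $y_i$ with derivative $1$ there and is therefore the promised parabolic transition. In particular this single $C^1$ construction already yields the first, weaker, assertion of the proposition.

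Finally $h$ is an orientation-preserving homeomorphism of $\RR$: all three pieces are increasing ($g'>0$ off the pole and the $A_i$ have positive slope), they agree at the junctions, and $A_1$, $g$, $A_2$ carry the three intervals onto $(-\infty,g(y_1))$, $(g(y_1),g(y_2))$ and $(g(y_2),+\infty)$ respectively, thus covering $\RR$. I expect the only genuine obstacle to be the requirement that these derivative-matching outer pieces actually lie in $\SL_2(\QQ)$: this is precisely why the hyperbolic cut cannot be reused and why one must glue with affine (parabolic) pieces, the needed rationality being supplied automatically by the fact that $g'$ at a rational point is a rational square.
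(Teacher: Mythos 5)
Your proof is correct, and it takes a genuinely different route from the paper's. The paper proceeds in two stages: it first redoes the hyperbolic cut of \Cref{prop:cut-paste} while forcing the fixed points $\xi_\pm$ to be rational, via the Euclidean trick of choosing $n$ so that the trace is $N+1/N$ and hence $\tau^2-4=(N-1/N)^2$ is a rational square; it then restores smoothness at each of the two resulting breakpoints by inserting a short unipotent piece $u$ whose derivative interpolates between $1$ and the multiplier $a^2$ of the diagonal part. You instead bypass the hyperbolic cut entirely and glue $g$ on $(y_1,y_2)$ directly to the two first-order Taylor (affine) approximations $A_i$ at rational points $y_i$, observing that $g'(y_i)=(cy_i+d)^{-2}$ is automatically a rational \emph{square}, so that $A_i$ lifts to $\SL_2(\QQ)$; the transition elements $A_i\inv g$ are then parabolic fixing $y_i$, which is exactly the $C^1$ matching condition. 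All the verifications you give (rationality of slope and intercept, monotonicity of the three pieces, surjectivity onto $\RR$, matching of values and one-sided derivatives) are sound, and your $h$ serves perfectly in the proof of \Cref{thm:Q}, since \Cref{prop:Frank:fixed} only requires prescribing the germ at almost every $x$. The one feature your construction gives up, relative to the paper's, is the behaviour at infinity: the paper's $h$ agrees with a single projective map (a translation) on both unbounded components, hence extends to a map of $\PP^1_\RR$ that is projective, in particular $C^1$, near $\infty$ (this is the content of the remark following the paper's proof), whereas your $h$ has different affine germs at $-\infty$ and $+\infty$ and so acquires a breakpoint at $\infty$ when viewed on $\PP^1_\RR$. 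This is immaterial for the proposition as stated and for its application, but it is worth being aware of; in exchange, your argument is shorter and makes the role of parabolicity at the breakpoints conceptually transparent.
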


\begin{proof}[Proof of \Cref{prop:smooth:cut}]
We first justify why we can take breakpoints in $\QQ$. Thus, in the proof of \Cref{prop:cut-paste}, we want the fixed points $\xi_\pm$ of $q_0$ to be rational. Equivalently, the root $\sqrt{\tau^2-4}$ must be rational, recalling that $\tau$ is the trace $a + d + n c$.

The point is that we were completely free to choose $n$ as long as we can take $|n|$ arbitrarily large and with a sign prescribed by the relative position of $g x$ and $g \infty$. Since we work now in $\SL_2(\QQ)$, we just need to show that there are such $n\in \QQ$ with moreover $\sqrt{\tau^2-4}\in \QQ$.

The solution to this Diophantine problem was already know to Euclid (see Book X, Proposition~29 in the \emph{Elements}), as follows. Given any integer $N$, let define $n\in \QQ$ by $n=(N + 1/N - a-d)/c$. Then $\tau = N+1/N$ and thus $\tau^2-4=(N-1/N)^2$ is indeed a square. Moreover, $n$ can indeed be chosen arbitrarily large of either sign simply by letting $N\to\pm \infty$ in $\ZZ$.

\medskip
We now turn to the $C^1$ condition, which will require additional dissections to assemble a polytomous spline.

The only singularities introduced in the proof of \Cref{prop:cut-paste} arise from the two points $\xi_\pm$, where $q$ has breakpoints, transitioning from $q_0$ to the identity and back. The strategy is to smoothen $q$ near one breakpoint at the time, which is sufficient provided the modification can be done in a small enough neighbourhood of the breakpoint. Since $\xi_\pm$ are rational and $\SL_2(\QQ)$ acts doubly transitively on the rational points of the projective line, it suffices to prove the following claim:

For any $\epsilon>0$ and for any $p_0\in \SL_2(\QQ)$ fixing $0$ and $\infty$, there exists a $C^1$-smooth piecewise-$\SL_2(\QQ)$ homeomorphism $p_1$ of $\RR$ with breakpoints in $\QQ$ and which coincides with the identity on $(-\infty, -\epsilon]$ and with $p_0$ on $[\epsilon, +\infty)$.

The assumptions on $p_0$ imply that it is given by a diagonal matrix, or equivalently that there is $a\in \QQ$ with $p_0(x) = a^2 x$ for all $x$; without loss of generality, $a>0$.  Let $\epsilon_1>0$ be rational with $\epsilon_1 \leq \epsilon, \epsilon/a$ and define $u\in \SL_2(\QQ)$ by
\[
u=\frac{1}{a+1}
\begin{pmatrix}
2 a & \epsilon_1 a (a-1) \\
(1-a)/(\epsilon_1 a) & 2
\end{pmatrix}
\]
(The conceptual explanation for this choice is that $u$ is a unipotent, which allows us to match derivatives.) We now define $p_1$ as follows for $x\in \RR$:
\[
p_1(x) =
\begin{cases}
x & \text{ if } x \in (-\infty, -\epsilon_1 a],\\
u(x) = \dfrac{2 a x + \epsilon_1 a (a-1)}{(1-a) x /(\epsilon_1 a ) + 2} & \text{ if } x \in (-\epsilon_1 a, \epsilon_1],\\
p_0(x) =a^2 x & \text{ if }  x \in (\epsilon_1, +\infty).
\end{cases}
\]
Thus we only have to check that $p_1$ is indeed a $C^1$-smooth homeomorphism. Note first that the denominator in $u(x)$ vanishes only at $x={2 \epsilon_1 a /{(a-1)}}$, which is outside the interval $(-\epsilon_1 a, \epsilon_1]$ where $u$ is applied. Now we turn to the breakpoints, where $p_1$ is continuous since $u( -\epsilon_1 a) =  -\epsilon_1 a$ and $u(\epsilon_1)=a^2 \epsilon_1$. Furthermore, computing
\[
u'(x) = \left(\frac{(1+a) a \epsilon_1}{(1-a) x + 2 a \epsilon_1}\right)^2
\]
we find that $u'(x)=1$ at $x= -\epsilon_1 a$ and $u'(x)=a^2$ at $x= \epsilon_1$. This verifies that $p_1$ is a $C^1$-smooth homeomorphism.
\end{proof}

\begin{rem}
If we consider $h$ as a transformation of  $\PP^1_\RR$ fixing $\infty$ rather than as a transformation of $\RR$, then it remains true that $h$ is $C^1$. Indeed, recall from the proof of \Cref{prop:cut-paste} that $h$ is just a translation in some neighbourhood of $\infty$; this fact has not been altered by the smoothing operation of the above proof. Thus $h$ is projective (in particular smooth) in a neighbourhood of $\infty$ in $\PP^1_\RR$.
\end{rem}

\begin{proof}[Proof of \Cref{thm:Q}]
It suffices to exhibit a convex compact $H_\QQ(\QQ)$-space admitting a $H_\ZZ(\QQ)$-fixed point but no point fixed by the smooth group $H_\QQ^{C^1}(\QQ)$. Choose any prime $p$. Then the space of measurable maps $\PP^1_\RR \to \Prob(\PP^1_{\QQ_p})$ will do. Indeed, \Cref{prop:smooth:cut} allows us to apply \Cref{prop:Frank:fixed}. Thus it suffices to show that $\SL_2(\QQ)$ has no fixed point in $K$. This follows from the case of its subgroup $\SL_2(\ZZ[1/p])$ established in the proof of \Cref{thm:basic:non-amen}.
\end{proof}

\subsection{Organising the layers of non-amenability}
\label{sec:spa}
This last section is purely descriptive and is placed in the context of completely general topological groups $G$ and arbitrary subgroups of $G$.

We propose to consider some sort of ``spectrum'' $\spa G$ recording the layers of non-amenability to be found between subgroups $G$; in particular, $\spa G$ will be reduced to a point if and only if $G$ itself is amenable.

Recall that given any two subgroups $L, H<G$, we say that $L$ is co-amenable to $H$ relative to $G$ if any (jointly continuous) convex compact $G$-space with an $L$-fixed point has an $H$-fixed point.

We write $L\succeq_G H$, of simply $L\succeq H$ when the ambient group does not vary. This is a pre-order relation.

We denote by $\spa G$ the quotient of the set of all subgroups of $G$ under the equivalence relation associated to the pre-order $\succeq_G$. Thus $\spa G$ is a poset and we still denote its order relation by $\succeq_G$ or $\succeq$. We denote the equivalence class of a subgroup $H<G$ by $[H]_G\in \spa G$, or simply $[H]$. 

It is sufficient to consider closed subgroups of $G$ because the closure $\ol H$ of $H$ in $G$ satisfies $[\ol H]=[H]$. Furthermore, $[H]$ only depends on the conjugacy class of $H$. For two subgroups $H,L<G$ we trivially have
\[
H<L \ \Longrightarrow \  [H] \preceq_G [L].
\]
In complete generality, $\spa G$ admits an upper bound, $[G]$, and a lower bound, $[1]$. The former coincides with the set of all co-amenable subgroups of $G$, while the latter coincides with the set of all relatively amenable subgroups of $G$. (Relative amenability, defined in \cite{Caprace-Monod_rel}, boils down to amenability in the case of discrete groups.)

In particular, $\spa G$ is reduced to a point if and only if $[G] = [1]$, which happens if and only if $G$ is amenable. It can happen that $\spa G$ consists precisely of two points, e.g.\ when $G$ is a non-amenable Tarski monster.

\medskip

Regarding functoriality, we note that any morphism $\varphi\colon G\to H$ of topological groups induces a morphism of posets $\spa G \to \spa H$, where for $L<G$ the point $[L]_G$ is mapped to $[\varphi(L)]_H$. This follows immediately from pulling back convex compact $H$-spaces to $G$-spaces via $\varphi$.

This map $\spa G \to \spa H$ is onto if $G\to H$ is onto, but monomorphisms might induce non-injections, for instance in the setting of \Cref{exam:MPP}. Indeed, if $L<G<H$ is such that $L$ is co-amenable in $H$ but not in $G$, then the inclusion morphism $G\to H$ maps the point $[L]_G \in\spa G$ to $[L]_H \in\spa H$, but we have $[L]_G\neq [G]_G$ and $[L]_H = [H]_H = [G]_H$.

It would be interesting to find examples where $\spa G$ can be completely described (without being trivial). We expect that for most familiar discrete groups this object is too large to be described, even though the case of Tarski monsters shows that there are exceptions. 

\medskip

\Cref{thm:more:gaps} can be reformulated as exhibiting a huge part of $\spa G$ for various piecewise-projective groups $G$, as follows. The poset of all sets of prime numbers is isomorphic to the poset of subgroup $\Gamma_S< H(\QQ)$. Then \Cref{thm:more:gaps} states that this uncountable poset is fully faithfully represented (as a poset) into $\spa H(\QQ)$.

More generally, fixing $S$, we can consider the poset of all subsets $T\se S$, which again gives us a poset of subgroups which is uncountable as soon as $S$ is infinite. The non-co-amenability of various $\Gamma_T$ relative to $H(\QQ)$ a fortiori implies non-co-amenability relative to $\Gamma_S$. Thus \Cref{thm:more:gaps} implies:

\begin{cor}\label{cor:spa}
The canonical map $T \to [\Gamma_T]$ is a fully faithful embedding of the poset of all subsets $T \se S$ into $\spa\Gamma_S$.\qed
\end{cor}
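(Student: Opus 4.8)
The plan is to unwind the phrase ``fully faithful embedding of posets'' and reduce the statement to the separation already furnished by \Cref{thm:all:gaps}. Since the source poset of subsets of $S$ is ordered by inclusion while $\spa\Gamma_S$ carries the order $\preceq_{\Gamma_S}$, full faithfulness amounts to the equivalence
\[
T \se T' \iff [\Gamma_T]_{\Gamma_S} \preceq_{\Gamma_S} [\Gamma_{T'}]_{\Gamma_S}, \kern5mm T, T' \se S.
\]
This two-sided equivalence automatically yields injectivity on objects, since $[\Gamma_T]=[\Gamma_{T'}]$ would force both inclusions and hence $T=T'$. The forward implication is the trivial monotonicity recorded in \Cref{sec:spa}: if $T \se T'$ then $\ZZ[1/T] \se \ZZ[1/T']$, so $\Gamma_T < \Gamma_{T'}$ inside $\Gamma_S$, and $H<L \Rightarrow [H]\preceq[L]$ gives $[\Gamma_T] \preceq_{\Gamma_S} [\Gamma_{T'}]$.

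The content lies in the reverse implication, which I would establish by contraposition: assuming $T \not\se T'$, I will exhibit a convex compact $\Gamma_S$-space with a $\Gamma_{T'}$-fixed point but no $\Gamma_T$-fixed point, thereby witnessing $[\Gamma_T] \not\preceq_{\Gamma_S} [\Gamma_{T'}]$. Choosing a prime $p \in T \smallsetminus T'$, I apply \Cref{thm:all:gaps} with its two sets of primes taken to be $T'$ and $T$, so that $p$ is the distinguishing prime lying in $T \smallsetminus T'$. The theorem then provides the convex compact $H(\QQ)$-space of measurable maps $\PP^1_\RR \to \Prob(\PP^1_{\QQ_p})$, which admits a $\Gamma_{T'}$-fixed point but no $\Gamma_T$-fixed point.

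The last step, and the only one demanding any care, is to descend this separation from $H(\QQ)$ to the smaller ambient group $\Gamma_S$. Because $T, T' \se S$ we have $\Gamma_T, \Gamma_{T'} < \Gamma_S < H(\QQ)$, so restricting the $H(\QQ)$-action to $\Gamma_S$ turns the space above into a $\Gamma_S$-space on the same underlying set. Restriction never creates new fixed points, so the $\Gamma_{T'}$-fixed point survives and the absence of $\Gamma_T$-fixed points is untouched; this is exactly the passage noted just before the statement, where non-co-amenability relative to $H(\QQ)$ implies a fortiori non-co-amenability relative to $\Gamma_S$. I would flag this restriction argument as the (very mild) main obstacle, since it is the one place where one must confirm that a separation proved relative to the large group genuinely descends to the small one; it does, precisely because every $H(\QQ)$-space is a $\Gamma_S$-space by restriction and fixed-point sets only grow under restriction. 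The resulting $\Gamma_S$-space is the desired witness, completing the reverse implication and thus the embedding.
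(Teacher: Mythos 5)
Your argument is correct and follows exactly the route the paper intends: the forward implication is the trivial monotonicity $H<L \Rightarrow [H]\preceq[L]$, and the reverse implication is \Cref{thm:all:gaps} applied to a prime $p\in T\smallsetminus T'$, with the witnessing $H(\QQ)$-space restricted to $\Gamma_S$ (the "a fortiori" step the paper records just before the corollary). Nothing is missing.
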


On the other hand we expect that non-discrete groups will provide more tractable examples. For instance, what is $\spa G$ for $G=\SL_n(\RR)$\,?


\bibliographystyle{amsalpha}
\bibliography{../../BIB/ma_bib}

\def\cprime{$'$}
\providecommand{\bysame}{\leavevmode\hbox to3em{\hrulefill}\thinspace}
\providecommand{\MR}{\relax\ifhmode\unskip\space\fi MR }
\providecommand{\MRhref}[2]{%
  \href{http://www.ams.org/mathscinet-getitem?mr=#1}{#2}
}
\providecommand{\href}[2]{#2}
\begin{thebibliography}{BHC62}

\bibitem[BHC62]{Borel-Harish-Chandra}
Armand Borel and Harish-Chandra, \emph{Arithmetic subgroups of algebraic
  groups}, Ann. of Math. (2) \textbf{75} (1962), 485--535.

\bibitem[BK21]{Bearden-Kalantar}
Alex Bearden and Mehrdad Kalantar, \emph{Topological boundaries of unitary
  representations}, Int. Math. Res. Not. IMRN (2021), no.~12, 9425--9457.

\bibitem[Bor63]{Borel63}
Armand Borel, \emph{Some finiteness properties of adele groups over number
  fields}, Publ. Math., Inst. Hautes {\'E}tud. Sci. \textbf{16} (1963),
  101--126.

\bibitem[CFP96]{Cannon-Floyd-Parry}
James~Welden Cannon, William~J. Floyd, and Walter~R. Parry, \emph{Introductory
  notes on {R}ichard {T}hompson's groups}, Enseign. Math. (2) \textbf{42}
  (1996), no.~3-4, 215--256.

\bibitem[CG85]{Carriere-Ghys}
Yves Carri{\`e}re and {\'E}tienne Ghys, \emph{Relations d'\'equivalence
  moyennables sur les groupes de {L}ie}, C. R. Acad. Sci. Paris S\'er. I Math.
  \textbf{300} (1985), no.~19, 677--680.

\bibitem[CM14]{Caprace-Monod_rel}
Pierre-Emmanuel Caprace and Nicolas Monod, \emph{Relative amenability}, Groups
  Geom. Dyn. \textbf{8} (2014), no.~3, 747--774.

\bibitem[Eym72]{Eymard72}
Pierre Eymard, \emph{Moyennes invariantes et repr\'esentations unitaires},
  Springer-Verlag, Berlin, 1972, Lecture Notes in Mathematics, Vol. 300.

\bibitem[GM23]{Gerasimova-Monod_ex}
Maria Gerasimova and Nicolas Monod, \emph{A family of exotic group
  {C*}-algebras}, Preprint, 2023.

\bibitem[Hjo06]{Hjorth_furstenberg}
Greg Hjorth, \emph{The {F}urstenberg lemma characterizes amenability}, Proc.
  Amer. Math. Soc. \textbf{134} (2006), no.~10, 3061--3069.

\bibitem[Imb97]{Imbert}
Michel Imbert, \emph{Sur l'isomorphisme du groupe de {R}ichard {T}hompson avec
  le groupe de {P}tol\'{e}m\'{e}e}, Geometric {G}alois actions, 2, London Math.
  Soc. Lecture Note Ser., vol. 243, Cambridge Univ. Press, Cambridge, 1997,
  pp.~313--324.

\bibitem[LM16]{Lodha-Moore}
Yash {Lodha} and Justin~Tatch {Moore}, \emph{{A nonamenable finitely presented
  group of piecewise projective homeomorphisms}}, {Groups Geom. Dyn.}
  \textbf{10} (2016), no.~1, 177--200.

\bibitem[Mon13]{Monod_PNAS}
Nicolas Monod, \emph{Groups of piecewise projective homeomorphisms}, Proc.
  Natl. Acad. Sci. USA \textbf{110} (2013), no.~12, 4524--4527.

\bibitem[Mon21]{Monod_furstenberg}
\bysame, \emph{Furstenberg boundaries for pairs of groups}, Ergodic Theory Dyn.
  Syst. \textbf{41} (2021), no.~5, 1514--1529.

\bibitem[Mon22]{Monod_wreath}
\bysame, \emph{Lamplighters and the bounded cohomology of {T}hompson's group},
  Geom. Funct. Anal. \textbf{32} (2022), no.~3, 662--675.

\bibitem[MP03]{Monod-Popa}
Nicolas Monod and Sorin Popa, \emph{On co-amenability for groups and von
  {N}eumann algebras}, C. R. Math. Acad. Sci. Soc. R. Can. \textbf{25} (2003),
  no.~3, 82--87.

\bibitem[Pes03]{Pestov}
Vladimir~Germanovich Pestov, \emph{On some questions of {E}ymard and {B}ekka
  concerning amenability of homogeneous spaces and induced representations}, C.
  R. Math. Acad. Sci. Soc. R. Can. \textbf{25} (2003), no.~3, 76--81.

\bibitem[Por13]{Portmann_PhD}
J{\"u}rg Portmann, \emph{Counting integral points on affine homogeneous
  varieties and {P}atterson--{S}ullivan theory}, Ph.D. thesis, ETH Z{\"u}rich,
  2013.

\bibitem[Sam70]{Samuel_ANT}
Pierre Samuel, \emph{Algebraic theory of numbers}, Houghton Mifflin Co.,
  Boston, Mass., 1970, Translated from the French by Allan J. Silberger.

\bibitem[Sie39]{Siegel39}
Carl~Ludwig Siegel, \emph{Einheiten quadratischer {F}ormen}, Abh. Math. Sem.
  Univ. Hamburg \textbf{13} (1939), no.~1, 209--239.

\bibitem[Sta21]{Stankov_H}
Bogdan Stankov, \emph{Non-triviality of the {P}oisson boundary of random walks
  on the group {$H(\Bbb{Z})$} of {M}onod}, Ergodic Theory Dynam. Systems
  \textbf{41} (2021), no.~4, 1160--1189.

\bibitem[Zim84]{Zimmer84}
Robert~Jeffrey Zimmer, \emph{Ergodic theory and semisimple groups},
  Birkh\"auser Verlag, Basel, 1984.

\end{thebibliography}

\end{document}